\newtheorem{theorem}{Theorem}[section]
\newtheorem{lemma}[theorem]{Lemma}
\begin{document}
\title{On an
area-preserving inverse curvature flow for plane curves}

\author{
  Zezhen Sun
    \thanks{Corresponding author. The Department of General Education, Anhui Water Conservancy Technical College, Hefei 231603, China. E-mail address: \texttt{52205500017@stu.ecnu.edu.cn} }
  \and
  Yuting Wu
    \thanks{College of Mathematics and Statistics, Northwest Normal University, Lanzhou 730070, China. E-mail address: \texttt{52215500001@stu.ecnu.edu.cn}}
}

\maketitle
\begin{abstract}
  \noindent
  In this paper, we study a $1/\kappa^{n}$-type area-preserving non-local flow of convex closed plane curves for any $n>0$. We show that the flow exists globally, the length of evolving curve is
 non-increasing, and the limiting curve will be a circle in the $C^{\infty}$ metric as time $t\to\infty$.
  \\
  \\
  {\bf Keywords:}Area-preserving, Inverse curvature flow, Existence, Convergence
  \\
  {\bf Mathematics Subject Classification:} 53E99,35B40,35K55,
\end{abstract}

\section{Introduction}
Over the past several decades, geometric analysts have made extensive investigations into the problem of curve evolution within the planar domain. The most famous one is curve shortening flow studied by Gage–Hamilton \cite{Gage-Hamilton,Gage83,Gage84},  Grayson \cite{Grayson87}, Abresch–Langer \cite{uajl}, and others. A wealth of profound findings concerning the evolution of curves driven by curvature can be found in Chou and Zhu’s monograph \cite{Chou-Zhu} and the associated references therein. Additionally, due to applications in fields such as fluid dynamics, phase transitions, and image processing, people began to study the non-local curve flows, such as length-preserving flows \cite{gpt1,mz12,Tsai-Wang,p01,Pan-Yang}, area-preserving flows\cite{gpt2,Gage85,Ma-Cheng,Mao-Pan-Wang,Sun}.

In \cite{Lin-Tsai2012}, Lin-Tsai summarized previous nonlocal curve flows to the following general form
\begin{equation}\label{fl}
  \left\{\begin{aligned}
    \frac{\partial X(u, t)}{\partial t}&=\big[F(\kappa(u,t))-\lambda(t)\big]\mathbf{N}(u,t),\\
    X(u, 0)&=X_{0}(u),
  \end{aligned}
  \right.
  \end{equation}
where $X_{0}(u)\subset\mathbb{R}^{2}$ is a given smooth closed curve, parameterized by $u\in S^{1}$, and $X(u, t):S^{1}\times[0,T)\rightarrow\mathbb{R}^{2}$ is a family of curves moving along its inward normal direction $
\mathbf{N}(u,t)$ with given speed function $F(\kappa(u,t))-\lambda(t)$. Here $F(\kappa(u,t))$ is a given function of the curvature $\kappa$ satisfying the parabolic condition $F^{'}(z)>0$
for all $z$ in its domain. The term $\lambda(t)$  is a time-dependent function determined by global geometric quantities associated with the evolving curve $X (\cdot, t)$, such as its length $L(t)$, enclosed area $A(t)$, or integrals of curvature over the curve.

In \cite{yyl}, Yang-Zhao-Zhang studied an area-preserving inverse curvature flow of the form
\begin{equation}\label{fl0}
  \left\{\begin{aligned}
    \frac{\partial X(u, t)}{\partial t}&=\big(p\lambda(t)-\frac{1}{\kappa}\big)\mathbf{N}(u,t),\\
    X(u, 0)&=X_{0}(u),\quad u\in S^{1},
  \end{aligned}
  \right.
  \end{equation}
where $p=-<X,\mathbf{N}>$ is the support function and the non-local term $\lambda(t)=\frac{1}{2A}\int_{X (\cdot, t)}\kappa^{-1}ds$. Here $s$ is the arc length parameter of $X (\cdot, t)$ and $A$ is the enclosed area. It is evident that flow \eqref{fl0} differs structurally from previous non-local flow models. Motivated by their work, we consider the following
 $1/\kappa^{n}$-type nonlocal area-preserving curvature flow
\begin{equation}\label{fl}
  \left\{\begin{aligned}
    \frac{\partial X(u, t)}{\partial t}&=\big(p\lambda(t)-\frac{1}{\kappa^{n}}\big)\mathbf{N}(u,t),\\
    X(u, 0)&=X_{0}(u),\quad u\in S^{1},
  \end{aligned}
  \right.
  \end{equation}
where $\lambda(t)=\frac{1}{2A}\int_{X (\cdot, t)}\kappa^{-n}ds$ and $n>0$ is a constant. It can be seen that when $n=1$, the flow \eqref{fl} reduces to the flow \eqref{fl0}. Therefore, our flow \eqref{fl} can be regarded as a generalization of flow \eqref{fl0}. Besides, under the flow \eqref{fl}, the singularity does not occur (i.e.,curvature of the evolving curve does not blow up to $+\infty$). This behavior differs from previous $1/\kappa^{n}$-type inverse curvature flows \cite{gpt1,gpt2}. The main result of this paper is as follows.

\begin{theorem}\label{them}
A strictly convex closed  plane curve $X_{0}(u)$ which evolves according to \eqref{fl} remains convex,
preserves its enclosed area and decreases in length, becomes more and more circular during the evolution process. Moreover, the limiting curve will be a finite circle with radius $\sqrt{A(0)/\pi}$ in the $C^{\infty}$ metric as $t\to\infty$.
\end{theorem}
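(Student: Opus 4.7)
The plan is to work in the support-function formulation on $S^{1}$ and execute a standard four-step program: reformulation, monotone geometric identities, a priori $C^{\infty}$ estimates, and asymptotic analysis. Since $X_{0}$ is strictly convex, I parametrize the evolving curves by the outward normal angle $\theta\in S^{1}$ and let $p(\theta,t)$ denote the support function, so that the radius of curvature is $\rho:=1/\kappa = p+p_{\theta\theta}$. Translating \eqref{fl} into the $\theta$-gauge gives $p_{t}=\rho^{n}-\lambda p$, and hence
\begin{equation*}
\rho_{t} \;=\; (\rho^{n})_{\theta\theta} + \rho^{n} - \lambda(t)\,\rho, \qquad \lambda(t) \;=\; \frac{1}{2A}\int_{0}^{2\pi}\rho^{\,n+1}\,d\theta.
\end{equation*}
This is strictly parabolic on $\{\rho>0\}$, so classical quasilinear parabolic theory yields short-time existence and smoothness.

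Next I verify the global geometric identities. Using $2A=\int p\rho\,d\theta$, $L=\int\rho\,d\theta$, and integration by parts, direct computation gives $A'(t)\equiv 0$ and $L'(t)=\int_{0}^{2\pi}\rho^{n}\,d\theta - \lambda(t)L$. The monotonicity $L'\leq 0$ reduces to the inequality $L\int\rho^{n+1}\,d\theta \geq 2A\int\rho^{n}\,d\theta$, which follows from Chebyshev's integral inequality applied to the comonotonic pair $(\rho,\rho^{n})$ on $S^{1}$ (giving $2\pi\int\rho^{n+1}d\theta\geq L\int\rho^{n}d\theta$) combined with the isoperimetric inequality $L^{2}\geq 4\pi A$. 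Preservation of strict convexity is immediate from the maximum principle at the spatial minimum of $\rho$: there $(\rho^{n})_{\theta\theta}\geq 0$, so $\rho_{t}\geq \rho^{n}-\lambda(t)\rho$, which prevents $\rho_{\min}$ from reaching zero in finite time.

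The technical core is the derivation of uniform two-sided bounds on $\rho$. After translating so that the origin lies inside each evolving curve, $p$ is pointwise bounded by the diameter, which itself is bounded by $L(t)/2\leq L_{0}/2$. At a spatial maximum of $\rho$, $(\rho^{n})_{\theta\theta}\leq 0$, so $\rho_{t}\leq \rho^{n}-\lambda\rho$; combining this with a positive lower bound for $\lambda(t)$ extracted from $A\equiv A_{0}$ and H\"older's inequality bounds $\rho_{\max}$ uniformly, giving a positive lower bound on $\kappa$. The matching uniform lower bound on $\rho$ comes from the maximum principle at the minimum together with the $L$-monotonicity and area preservation. With $\rho$ controlled in $C^{0}$ the equation becomes uniformly parabolic, and Krylov--Safonov together with Schauder bootstrap yield $C^{\infty}$ bounds on $\rho$; long-time existence then follows from the continuation principle.

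For the asymptotic behavior, the isoperimetric deficit $L(t)^{2}-4\pi A(t)$ is nonincreasing and bounded below by $0$, hence converges. Since equality in the Chebyshev/isoperimetric argument used for $L'\leq 0$ occurs only on circles, the uniform $C^{\infty}$ bounds force $L(t)^{2}-4\pi A\to 0$, so every $C^{\infty}$ subsequential limit is a round circle; area preservation pins the radius to $\sqrt{A(0)/\pi}$. Full (not merely subsequential) $C^{\infty}$ convergence then follows by linearizing the $\rho$-equation around the constant solution $\rho\equiv\sqrt{A(0)/\pi}$: the resulting self-adjoint operator on $S^{1}$ has a spectral gap on the subspace of perturbations compatible with the support-function structure and with area preservation, yielding exponential decay of all derivatives. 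The main obstacle is the two-sided $C^{0}$ control of $\rho$ for general $n>0$, because the nonlocal term $\lambda(t)$ couples the dynamics at the extrema to the full profile of $\rho$ and requires careful integral estimates (particularly in the regime $n>1$, where the reaction term $\rho^{n}-\lambda\rho$ is superlinear) to close the maximum principle uniformly in $n$.
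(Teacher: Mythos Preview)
Your reformulation, the area/length monotonicity via Chebyshev plus the isoperimetric inequality, the short-time existence, and the finite-time preservation of convexity ($\rho_{\min}(t)\geq\rho_{\min}(0)e^{-M_{2}t}$ from $\rho_{t}\geq-\lambda\rho$ at the minimum) are all correct and match the paper's approach.

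The genuine gap is in your uniform upper bound on $\rho$. At the spatial maximum you obtain $\rho_{t}\leq\rho^{n}-\lambda\rho$, and you propose to close this using the lower bound $\lambda\geq M_{1}$. But for $n>1$ the right-hand side $\rho^{n}-M_{1}\rho$ is \emph{positive} once $\rho>M_{1}^{1/(n-1)}$, so the ODE comparison permits $\rho_{\max}(t)\to\infty$; you cannot conclude anything from the maximum principle alone. You flag this superlinearity as ``the main obstacle'' in your last sentence, but your plan does not contain the missing idea. The paper bypasses the pointwise maximum principle entirely here: setting $\varphi=\rho^{n}$, it proves a gradient estimate $\max(\varphi^{2}+\varphi_{\theta}^{2})\leq\max\{\max\varphi^{2},\,\text{initial data}\}$, which yields a Harnack-type statement ($\varphi$ is comparable to $\varphi_{\max}$ on a fixed arc near the maximum). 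Then if $\varphi_{\max}(t_{i})\to\infty$ one gets $L(t_{i})=\int_{0}^{2\pi}\varphi^{1/n}\,d\theta\to\infty$, contradicting $L(t)\leq L(0)$.

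The same issue recurs for the uniform \emph{lower} bound on $\rho$ (equivalently, the uniform upper bound on $\kappa$). At the spatial minimum $\rho_{t}\geq\rho^{n}-\lambda\rho$, and for $n>1$ this is negative for small $\rho$, so again the maximum principle gives only the exponential-in-time bound above, not a time-independent one. The paper's device here is different again: an energy $E(t)=\tfrac{1}{2}\int\rho^{2}\,d\theta$ satisfies $E'\leq -n\int\rho^{n-1}\rho_{\theta}^{2}\,d\theta+C$, and since $E\geq A(0)$ one deduces $\int_{0}^{\infty}\!\!\int\rho^{n-1}\rho_{\theta}^{2}\,d\theta\,dt<\infty$; this time-integrability is then combined with the fact that $\rho$ hits the value $\sqrt{A(0)/\pi}$ at some point for every $t$ (from the isoperimetric and Gage inequalities) to produce a uniform lower bound on $\rho_{\min}$. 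Your Krylov--Safonov/Schauder bootstrap and the asymptotic analysis are fine once these $C^{0}$ bounds are in hand, but as written your plan does not supply them for $n>1$.
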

In what follows, we use subscripts to denote partial derivatives.
\section{Proof of Theorem \ref{them}}
The proof of Theorem \ref{them} will be broken down into a number of lemmas. Since the case where \(n = 1\) has already been studied in \cite{yyl}, this paper will only consider the cases where \(0 < n < 1\) and \(n > 1\).
In what follows, we use subscripts to denote partial derivatives.
\subsection{Preparations}
Let the metric along the curve be defined as $g(u,t)=|X_{u}|=(x^{2}_{u}+y^{2}_{u})^{\frac{1}{2}}$. The arc-length element is then expressed as $ds=g(u,t)du$, or formally
$$\frac{\partial}{\partial s}=\frac{1}{g}\frac{\partial}{\partial u},\quad \frac{\partial s}{\partial u}=g.$$
The tangent $\mathbf{T}$, normal $\mathbf{N}$, tangent angle $\theta$, curvature $\kappa$ and the length $L$ of the curve and the area $A$ it bounds are defined in the standard way:
\begin{align*}
\mathbf{T}&=\frac{\partial X}{\partial s}=\frac{1}{g}\frac{\partial X}{\partial u},\quad \kappa=\frac{\partial\theta}{\partial s},\quad \mathbf{N}=\frac{1}{\kappa}\frac{\partial T}{\partial s}=\frac{1}{\kappa g}\frac{\partial T}{\partial u},\\
L(t)&=\int^{b}_{a}g(u,t)du=\oint ds,\quad A(t)=\frac{1}{2}\oint xdy-ydx=-\frac{1}{2}\oint<X,\mathbf{N}>ds.
\end{align*}
Since changing the tangential components of $\frac{\partial X}{\partial t}$ influences only the parametrization, not the geometric shape of the evolving curve, we are free to select an appropriate tangential component
 $\alpha=-\frac{\partial}{\partial\theta}(p\lambda(t)-\kappa^{-n})$. This choice enables the simplification of the geometric analysis of the evolving curves, that is, we can instead consider the following equivalent evolution problem
\begin{equation}\label{fl2}
  \left\{\begin{aligned}
    \frac{\partial X(u, t)}{\partial t}&=-\frac{\partial}{\partial\theta}\big(p\lambda(t)-\kappa^{-n}\big)\mathbf{T}(\theta,t)
    +\big(p\lambda(t)-\kappa^{-n}\big)\mathbf{N}(\theta,t),\\
    X(\theta, 0)&=X_{0}(\theta),\quad (\theta,t)\in S^{1},
  \end{aligned}
  \right.
  \end{equation}

The following evolution equations are standard and can be obtained following the approach in \cite{Chou-Zhu}.
\begin{lemma}\label{eeg}
Under the flow \eqref{fl2}, the geometric quantities for the length, the enclosed area,  the curvature and the support function evolve as
\begin{align}
\label{at}\frac{dA}{dt}&=-\int_{0}^{L}\big(p\lambda(t)-\kappa^{-n}\big)ds
=\int_{0}^{2\pi}\kappa^{-n-1}d\theta-2A\lambda(t),\\
\label{lt}\frac{dL}{dt}&=-\int_{0}^{L}\kappa\big(p\lambda(t)-\kappa^{-n}\big)ds
=\int_{0}^{2\pi}\kappa^{-n}d\theta-L\lambda(t),\\
\label{kt}\frac{\partial\kappa}{\partial t}&=\kappa^{2}\bigg[\big(p\lambda(t)-\kappa^{-n}\big)_{\theta\theta}+
p\lambda(t)-\kappa^{-n}\bigg]=\kappa^{2}\big[\kappa^{-1}\lambda(t)-\kappa^{-n}-(\kappa^{-n})_{\theta\theta}\big],
\end{align}
where the facts that $\int_{0}^{L}\kappa pds=L, \int_{0}^{L} pds=2A$ and $p_{\theta\theta}+p=\frac{1}{\kappa}$ have been used.
\end{lemma}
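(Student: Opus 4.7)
The plan is to verify each of the three evolution identities by standard computation, relying on the fact that the special tangential component $\alpha=-(p\lambda-\kappa^{-n})_{\theta}$ in \eqref{fl2} is chosen precisely so that the tangent angle $\theta$ can serve as a time-independent parameter, which turns the whole calculation into one-variable calculus on $S^{1}$.

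First I would write the normal and tangential speeds as $\beta(\theta,t)=p\lambda(t)-\kappa^{-n}$ and $\alpha(\theta,t)=-\beta_{\theta}$, and recall the standard kinematic formulas for an arbitrary flow $X_{t}=\alpha\mathbf{T}+\beta\mathbf{N}$ on a closed convex curve: the metric satisfies $g_{t}=(\alpha_{s}-\kappa\beta)g$, hence
\begin{equation*}
\frac{dL}{dt}=\oint(\alpha_{s}-\kappa\beta)\,ds=-\oint\kappa\beta\,ds,
\end{equation*}
because $\alpha_{s}$ integrates to zero on a closed curve. Likewise $\frac{dA}{dt}=-\oint\beta\,ds$ follows from differentiating $A=-\frac12\oint\langle X,\mathbf{N}\rangle\,ds$, the tangential component again dropping out by a divergence argument. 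Inserting $\beta=p\lambda-\kappa^{-n}$ gives the first equalities in \eqref{at} and \eqref{lt} immediately.

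Next I would convert the two integrals to $\theta$-integrals via $ds=d\theta/\kappa$. For the area,
\begin{equation*}
-\oint(p\lambda-\kappa^{-n})\,ds=-\lambda\!\int_{0}^{2\pi}\!\frac{p}{\kappa}\,d\theta+\!\int_{0}^{2\pi}\!\kappa^{-n-1}\,d\theta,
\end{equation*}
and the identity $\oint p\,ds=\int_{0}^{2\pi}p/\kappa\,d\theta=2A$ supplied in the lemma yields the second equality in \eqref{at}. For the length, the same substitution together with $\oint\kappa p\,ds=L$ produces the second equality in \eqref{lt}.

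The main piece of work is the curvature equation \eqref{kt}, and this is where the tangential choice pays off. With $\alpha=-\beta_{\theta}$ one checks from $\theta_{t}=\kappa\alpha+\beta_{s}=\kappa\alpha+\kappa\beta_{\theta}=0$ that $\theta$ is genuinely time-independent, so differentiating $\kappa=\kappa(\theta,t)$ directly in $t$ is meaningful. Using $\kappa_{t}=\beta_{ss}+\kappa^{2}\beta+\alpha\kappa_{s}$ (the standard formula), rewriting $\partial_{s}=\kappa\partial_{\theta}$, and substituting $\alpha=-\beta_{\theta}$, all the first-derivative terms cancel and one is left with
\begin{equation*}
\frac{\partial\kappa}{\partial t}=\kappa^{2}\bigl(\beta_{\theta\theta}+\beta\bigr)=\kappa^{2}\bigl[(p\lambda-\kappa^{-n})_{\theta\theta}+p\lambda-\kappa^{-n}\bigr],
\end{equation*}
which is the first equality in \eqref{kt}. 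Finally I would invoke the support-function identity $p_{\theta\theta}+p=1/\kappa$, so that $\lambda(p_{\theta\theta}+p)=\kappa^{-1}\lambda$, and the second equality in \eqref{kt} follows by subtraction.

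The only technical obstacle I foresee is keeping the signs and the inward-normal convention consistent when deriving $g_{t}$, $\theta_{t}$ and $\kappa_{t}$; once those standard formulas are laid down correctly, the rest of the lemma is bookkeeping using $ds=d\theta/\kappa$, $\oint p\,ds=2A$, $\oint\kappa p\,ds=L$ and $p_{\theta\theta}+p=1/\kappa$.
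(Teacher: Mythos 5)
Your derivation is correct and is exactly the standard computation that the paper itself skips, citing only that these identities ``can be obtained following the approach in Chou--Zhu'': the kinematic formulas $g_{t}=(\alpha_{s}-\kappa\beta)g$, $\theta_{t}=\kappa\alpha+\beta_{s}$ and $\kappa_{t}=\beta_{ss}+\kappa^{2}\beta+\alpha\kappa_{s}$, combined with the choice $\alpha=-\beta_{\theta}$ that makes $\theta$ time-independent, give precisely \eqref{at}--\eqref{kt} after the substitutions $ds=d\theta/\kappa$, $\oint p\,ds=2A$, $\oint\kappa p\,ds=L$ and $p_{\theta\theta}+p=1/\kappa$. No gaps; your write-up supplies the details the paper omits.
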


\begin{lemma}\label{llaa}
(The monotonicity of length and area)Under the flow \eqref{fl2}, we have
\begin{equation*}
\frac{dA(t)}{dt}=0,
\quad\frac{dL(t)}{dt}\le0\quad\forall t\in[0,T).
\end{equation*}
\end{lemma}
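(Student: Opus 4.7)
The lemma splits into two parts: the area identity follows by direct substitution, while the length monotonicity requires a sharp inequality between weighted integrals of powers of $\kappa$. My plan is to dispose of $dA/dt=0$ first, then reduce $dL/dt\le 0$ to the estimate
\begin{equation*}
2A\int_0^{2\pi}\kappa^{-n}\,d\theta \;\le\; L\int_0^{2\pi}\kappa^{-n-1}\,d\theta,
\end{equation*}
and establish this estimate by a log-convexity argument anchored at the classical isoperimetric inequality.

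For the area, I first use $ds=d\theta/\kappa$ to rewrite the non-local coefficient as
\begin{equation*}
\lambda(t)=\frac{1}{2A}\int_0^{L}\kappa^{-n}\,ds=\frac{1}{2A}\int_0^{2\pi}\kappa^{-n-1}\,d\theta.
\end{equation*}
Substituting into \eqref{at} from Lemma~\ref{eeg} makes the two terms cancel exactly, so $dA/dt=0$.

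For the length, the same substitution in \eqref{lt} yields
\begin{equation*}
\frac{dL}{dt}=\int_0^{2\pi}\kappa^{-n}\,d\theta-\frac{L}{2A}\int_0^{2\pi}\kappa^{-n-1}\,d\theta,
\end{equation*}
so the first displayed inequality is exactly the claim $dL/dt\le 0$. To prove it, I introduce $I(s):=\int_0^{2\pi}\kappa^{-s}\,d\theta$ and observe three facts. First, $I(0)=2\pi$ and $I(1)=\int_0^{L}ds=L$. Second, H\"older's inequality applied to $\kappa^{-(ts_1+(1-t)s_2)}=(\kappa^{-s_1})^{t}(\kappa^{-s_2})^{1-t}$ with conjugate exponents $1/t,1/(1-t)$ shows that $\log I$ is convex on $\mathbb{R}$. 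Third, this convexity gives $(\log I)'(s)\le(\log I)'(s+1)$, so $\phi(s):=I(s)/I(s+1)$ satisfies $(\log\phi)'(s)\le 0$, is non-increasing, and in particular $\phi(n)\le\phi(0)=2\pi/L$ for every $n>0$. Combining with the isoperimetric inequality $L^2\ge 4\pi A$, which rewrites as $2\pi/L\le L/(2A)$, yields $\phi(n)\le L/(2A)$, i.e.\ the required estimate.

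The main obstacle, beyond the routine substitutions, is recognizing that the required inequality belongs to a one-parameter family whose $n=0$ endpoint is precisely the classical isoperimetric inequality; once this is noted, the log-convexity of $I(s)$ supplies the monotonicity automatically and the whole argument collapses into a short chain. A brute-force attack via direct Cauchy--Schwarz combined with the support-function identity $p_{\theta\theta}+p=1/\kappa$ becomes tangled and is harder to push through uniformly in $n$, which is why the H\"older/log-convexity route is preferable.
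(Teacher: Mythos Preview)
Your proof is correct and follows essentially the same route as the paper: both reduce $dL/dt\le 0$ to the inequality $L\int_0^{2\pi}\kappa^{-n}\,d\theta\le 2\pi\int_0^{2\pi}\kappa^{-n-1}\,d\theta$ and then apply the isoperimetric inequality $2\pi/L\le L/(2A)$. The paper simply quotes this bound as the H\"older-type inequality \eqref{hoi1}, whereas you supply an explicit derivation via the log-convexity of $s\mapsto\int_0^{2\pi}\kappa^{-s}\,d\theta$; the underlying argument is the same.
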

\begin{proof}
By \eqref{at}, one can easily see that $dA/dt=0$. As for the length, by the classical isoperimetric inequality
$L^2-4\pi A\ge0$ and the H$\ddot{o}$lder inequality
\begin{equation}\label{hoi1}
\int^{2\pi}_{0}\frac{1}{\kappa}d\theta\int^{2\pi}_{0}\frac{1}{\kappa^{n}}d\theta\le\int^{2\pi}_{0}d\theta
\int^{2\pi}_{0}\frac{1}{\kappa^{n+1}}d\theta,
\end{equation}
we have
$$\int_{0}^{2\pi}\frac{1}{\kappa^{n}}d\theta\le\frac{2\pi}{L}\int_{0}^{2\pi}\frac{1}{\kappa^{n+1}}d\theta
\le\frac{L}{2A}\int_{0}^{2\pi}\frac{1}{\kappa^{n+1}}d\theta,$$
which, together with \eqref{lt} yields $dL/dt\le0$.
\end{proof}
As a direct consequence of Lemma \eqref{llaa}, we can easily get
\begin{lemma}\label{lajie}
Under the flow \eqref{fl2}, we have
\begin{equation}\label{la}
 A(0)= A(t)\quad \text{and} \quad\sqrt{4\pi A(0)}\le L(t)\le L(0),\quad\forall t\in[0,T),
\end{equation}
and the isoperimetric ratio $\frac{L^{2}(t)}{4\pi A(t)}$ is decreasing in time $t\in[0,T)$.
\end{lemma}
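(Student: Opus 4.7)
The statement is presented as ``a direct consequence of Lemma \ref{llaa}'', so the plan is essentially to turn the differential inequalities $dA/dt=0$ and $dL/dt\le 0$ into integrated statements, then couple them with the classical isoperimetric inequality. I expect no serious obstacle here: the argument should be a short integration plus one invocation of $L^{2}\ge 4\pi A$.

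First, I would integrate the two evolution facts from Lemma \ref{llaa} over the time interval $[0,t]$. The equality $dA/dt=0$ gives $A(t)=A(0)$ for every $t\in[0,T)$, which is the conservation statement. The inequality $dL/dt\le 0$ gives $L(t)\le L(0)$, providing the upper bound on length. These together already furnish two of the three pieces.

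Next, for the lower bound $L(t)\ge\sqrt{4\pi A(0)}$, I would apply the classical isoperimetric inequality $L^{2}(t)\ge 4\pi A(t)$ (valid since the curve remains convex and hence simple), and then substitute $A(t)=A(0)$ from the previous step to conclude
\begin{equation*}
L^{2}(t)\ge 4\pi A(t)=4\pi A(0),
\end{equation*}
so $L(t)\ge\sqrt{4\pi A(0)}$ as claimed.

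Finally, for the monotonicity of the isoperimetric ratio $L^{2}(t)/(4\pi A(t))$, I would differentiate directly. Because $A(t)$ is a positive constant in $t$ and $L(t)\ge 0$ is non-increasing, the chain rule yields
\begin{equation*}
\frac{d}{dt}\!\left(\frac{L^{2}(t)}{4\pi A(t)}\right)=\frac{2L(t)}{4\pi A(0)}\,\frac{dL}{dt}\le 0,
\end{equation*}
so the ratio is non-increasing (and strictly decreasing unless the curve is already a circle, since $dL/dt=0$ forces equality in the H\"older step used in Lemma \ref{llaa}). The only point that deserves a line of care, rather than being truly hard, is ensuring the isoperimetric inequality applies throughout $[0,T)$, which is justified by the fact that convexity is preserved along the flow.
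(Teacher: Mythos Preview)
Your proof is correct and matches the paper's approach: the paper simply states the lemma as ``a direct consequence of Lemma~\ref{llaa}'' without writing out the details, and the argument you give---integrating $dA/dt=0$ and $dL/dt\le 0$, invoking the isoperimetric inequality for the lower bound on $L$, and differentiating the ratio---is precisely the intended unpacking of that remark.
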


Since the radius of curvature $\rho(\theta,t)=\frac{1}{\kappa(\theta,t)}$, we can get the following evolution equation
\begin{equation}\label{rt}
\frac{\partial\rho}{\partial t}=(\rho^{n})_{\theta\theta}+\rho^{n}-
\frac{\rho}{2A}\int^{2\pi}_{0}\rho^{n+1}d\theta,\quad(\theta,t)\in S^{1}\times[0,T)
\end{equation}
with $\rho(\theta,0)=\rho_{0}(\theta)=\frac{1}{\kappa_{0}(\theta)}>0$.  Set $\varphi(\theta,t)=\rho^{n}(\theta,t)$, then \eqref{rt} can be expressed as
\begin{equation}\label{nt}
\frac{\partial\varphi}{\partial t}=n\varphi^{1-\frac{1}{n}}[\varphi_{\theta\theta}+\varphi-\lambda(t)\varphi^{\frac{1}{n}}],\quad
\lambda(t)=\frac{1}{2A}\int^{2\pi}_{0}\varphi^{1+\frac{1}{n}}d\theta.
\end{equation}
The asymptotic behavior of the flow solution $X(\cdot,t)$ is determined by the asymptotic behavior of the scalar solution \eqref{kt}, or \eqref{rt},or \eqref{nt}.

\begin{lemma}\label{lamj}
(Bounds on the nonlocal term $\lambda(t)$)
Under the flow \eqref{fl2}, there exist two positive constants $M_{1}$ and $M_{2}$, independent of time,such that
\begin{equation}\label{laj}
M_{1}\le\lambda(t)\le M_{2},\quad\forall t\in[0,T).
\end{equation}
\end{lemma}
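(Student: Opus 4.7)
The plan is to derive both estimates from the already-established constancy $A(t)\equiv A(0)$, the length pinch $\sqrt{4\pi A(0)}\le L(t)\le L(0)$ from Lemma~\ref{lajie}, and integral inequalities applied to the representation
\[
\lambda(t)=\frac{1}{2A(0)}\int_0^{2\pi}\rho^{n+1}\,d\theta,\qquad\rho:=1/\kappa,
\]
which follows from $ds=\rho\,d\theta$ and the definition of $\lambda$.

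The lower bound $\lambda\ge M_1$ is immediate. H\"older's inequality with conjugate exponents $n+1$ and $(n+1)/n$ gives
\[
L(t)=\int_0^{2\pi}\rho\,d\theta\le(2\pi)^{n/(n+1)}\Big(\int_0^{2\pi}\rho^{n+1}\,d\theta\Big)^{1/(n+1)},
\]
so $\int_0^{2\pi}\rho^{n+1}\,d\theta\ge L^{n+1}/(2\pi)^{n}$. Substituting $L\ge\sqrt{4\pi A(0)}$ and $A=A(0)$ produces $\lambda(t)\ge (A(0)/\pi)^{(n-1)/2}=:M_1$, which is reassuringly equal to the value of $\lambda$ on the expected limiting circle of area $A(0)$, so this lower bound is actually sharp.

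The upper bound is the main obstacle, because the only two integral identities available, $\int\rho\,d\theta=L$ and $\int 1\,d\theta=2\pi$, control only the zeroth and first moments of $\rho$; concentration of $\rho$ on a short arc could a priori send $\int\rho^{n+1}\,d\theta$ to infinity without violating those two constraints. My plan is therefore to differentiate $\lambda$ along the flow. Using $\rho_t=(\rho^n)_{\theta\theta}+\rho^n-\lambda\rho$ from \eqref{rt} and integrating by parts on $S^1$ gives
\[
\frac{d\lambda}{dt}=\frac{n+1}{2A(0)}\Big(-\int_0^{2\pi}((\rho^n)_\theta)^2\,d\theta+\int_0^{2\pi}\rho^{2n}\,d\theta\Big)-(n+1)\lambda^2.
\]
For $0<n<1$, interpolating $\rho^{2n}$ between $\rho^{0}$ and $\rho^{n+1}$ by H\"older yields $\int\rho^{2n}\,d\theta\le(2\pi)^{(1-n)/(n+1)}\bigl(2A(0)\lambda\bigr)^{2n/(n+1)}$ with exponent $2n/(n+1)<1$; discarding the coercive gradient term leaves a differential inequality of the form $\lambda'(t)\le C\lambda^{2n/(n+1)}-(n+1)\lambda^2$, which forces $\lambda$ to stay bounded by some $M_2=M_2(n,A(0),\lambda(0))$. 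For $n>1$ the order mismatch $2n>n+1$ makes this plain H\"older step fail, and the intended remedy is a Gagliardo--Nirenberg / Sobolev inequality on $S^1$ applied to $f=\rho^n$, absorbing $\int\rho^{2n}\,d\theta$ into the coercive term $\int((\rho^n)_\theta)^2\,d\theta$ together with the bound on $\int\rho^{n}\,d\theta$ supplied by \eqref{hoi1}. Getting the constants in that absorption to cooperate uniformly for all $n>1$ so that the positive contributions are genuinely dominated by the coercive and the $-(n+1)\lambda^2$ terms for large $\lambda$ is the main technical hurdle I expect to spend the most effort on.
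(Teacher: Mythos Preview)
Your lower bound argument is exactly the paper's: H\"older plus $L\ge\sqrt{4\pi A(0)}$, and you even land on the sharp constant $(A(0)/\pi)^{(n-1)/2}$.

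For the upper bound you and the paper both differentiate $\lambda$ along the flow and obtain the same identity
\[
\frac{d\lambda}{dt}=\frac{n+1}{2A(0)}\Big(-\int_0^{2\pi}(\varphi_\theta)^2\,d\theta+\int_0^{2\pi}\varphi^{2}\,d\theta\Big)-(n+1)\lambda^2,\qquad \varphi:=\rho^{n},
\]
but from here the paper is far simpler than your plan. Instead of splitting into $n<1$ and $n>1$ and reaching for Gagliardo--Nirenberg, the paper applies Wirtinger's (Poincar\'e) inequality on $S^1$,
\[
\int_0^{2\pi}\varphi^2\,d\theta-\int_0^{2\pi}(\varphi_\theta)^2\,d\theta\le\frac{1}{2\pi}\Big(\int_0^{2\pi}\varphi\,d\theta\Big)^{2},
\]
and then uses the Lin--Tsai inequality $\int\rho^{\alpha+1}\,d\theta\ge\frac{L}{2\pi}\int\rho^{\alpha}\,d\theta$ together with $L^2\ge 4\pi A$ to get $\int\varphi^{1+1/n}\,d\theta\ge\sqrt{A/\pi}\,\int\varphi\,d\theta$. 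These two lines combine to show $d\lambda/dt\le 0$ outright, for every $n>0$. Hence $\lambda(t)\le\lambda(0)=:M_2$, with no case distinction and no ODE comparison.

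Your $0<n<1$ route does work and gives boundedness, but for $n>1$ your proposal is incomplete: you correctly sense that a Sobolev-type step is needed, and in fact the missing ingredient is precisely Wirtinger, which absorbs $\int\varphi^2$ into $\int(\varphi_\theta)^2$ plus $(\int\varphi)^2/2\pi$. Once you see that, there is no ``constants-to-cooperate'' hurdle at all. Note also that the paper's stronger conclusion---monotonicity of $\lambda$, not mere boundedness---is later reused to derive the geometric inequality in the final theorem, so your weaker ODE-comparison bound would not suffice for that application.
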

\begin{proof}
For the lower bound, by the H$\ddot{o}$lder inequality
\begin{equation}\label{hold}
L(t)=\int^{2\pi}_{0}\frac{1}{\kappa(\theta,t)}d\theta\le\bigg(\int^{2\pi}_{0}\frac{1}{\kappa^{n+1}(\theta,t)}d\theta\bigg)
^{\frac{1}{n+1}}\bigg(\int^{2\pi}_{0}d\theta\bigg)^{\frac{n}{n+1}},
\end{equation}
we have
$$\lambda(t)=\frac{1}{2A}\int^{2\pi}_{0}\frac{1}{\kappa^{n+1}}d\theta\ge\frac{L^{n+1}(t)}{2A(2\pi)^{n}}\ge
\frac{(4\pi A(0))^{\frac{n+1}{2}}}{2A(0)(2\pi)^{n}}\triangleq M_{1}.$$
 For the upper bound, we calculate
\begin{align}\label{la1}
\frac{d\lambda(t)}{dt}&=\frac{d}{dt}\bigg(\frac{1}{2A(0)}\int^{2\pi}_{0}\varphi^{1+\frac{1}{n}}d\theta\bigg)=
\frac{n+1}{2nA(0)}\int^{2\pi}_{0}\varphi^{\frac{1}{n}}\varphi_{t}d\theta\notag\\
&=\frac{n+1}{2nA(0)}\int^{2\pi}_{0}\varphi^{\frac{1}{n}}
\bigg(n\varphi^{1-\frac{1}{n}}[\varphi_{\theta\theta}+\varphi-\lambda(t)\varphi^{\frac{1}{n}}]\bigg)d\theta\notag\\
&=\frac{n+1}{2A(0)}\bigg[\int^{2\pi}_{0}-(\varphi_{\theta})^{2}d\theta+\int^{2\pi}_{0}\varphi^{2}d\theta
-\frac{1}{2A(0)}\bigg(\int^{2\pi}_{0}\varphi^{1+\frac{1}{n}}d\theta\bigg)^{2}\bigg].
\end{align}
Set $\bar{\varphi}=\frac{1}{2\pi}\int^{2\pi}_{0}\varphi d\theta.$ Applying Wirtinger's inequality yields
\begin{equation}\label{la11}
\int^{2\pi}_{0}(\varphi_{\theta})^{2}d\theta\ge\int^{2\pi}_{0}(\varphi-\bar{\varphi})^{2}d\theta=\int^{2\pi}_{0}\varphi^{2}d\theta-2\pi \bar{\varphi}^{2}.
\end{equation}
This implies
\begin{equation}\label{la2}
\int^{2\pi}_{0}\varphi^{2}d\theta\le\int^{2\pi}_{0}(\varphi_{\theta})^{2}d\theta+\frac{1}{2\pi}\big(\int^{2\pi}_{0}\varphi d\theta\big)^2.
\end{equation}
By isoperimetric inequality $L^{2}(t)\ge4\pi A(t)$ and Lin-Tsai inequality \cite{Lin-Tsai2012}
$$\int^{2\pi}_{0}\rho^{\alpha}d\theta\le\frac{2\pi}{L}\int^{2\pi}_{0}\rho^{\alpha+1}d\theta,\quad \forall\alpha\ge0,$$
we obtain
\begin{equation}\label{la3}
\int^{2\pi}_{0}\varphi^{1+\frac{1}{n}}d\theta\ge\frac{L}{2\pi}\int^{2\pi}_{0}\varphi d\theta\ge\sqrt{\frac{A}{\pi}}\int^{2\pi}_{0}\varphi d\theta.
\end{equation}
By combining \eqref{la1} and \eqref{la2}, we conclude that $d\lambda(t)/dt\le0$, meaning $\lambda(t)$ is decreasing in time. Thus, $\lambda(t)\le\lambda(0)$. The proof is done.
\end{proof}

\begin{lemma}\label{est}
Under the flow \eqref{fl2}, there holds the estimate
\begin{equation}
\max\limits_{S^{1}\times[0,t]}\Phi\le\max\bigg\{\max\limits_{S^{1}\times[0,t]}\varphi^{2},\max\limits_{S^{1}\times\{0\}}\Phi\bigg\},
\end{equation}
where $\Phi=\varphi^{2}+(\varphi_{\theta})^{2}$.
\end{lemma}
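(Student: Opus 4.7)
The plan is to apply a parabolic maximum principle to $\Phi = \varphi^{2} + (\varphi_{\theta})^{2}$. If the maximum on $S^{1} \times [0,t]$ is attained at $t=0$, we are done, so suppose it is attained at an interior time point $(\theta_{0},t_{0})$ with $t_{0} > 0$. At such a point the standard conditions $\Phi_{\theta}=0$, $\Phi_{\theta\theta}\le 0$, $\Phi_{t}\ge 0$ hold. From
\[
\Phi_{\theta}=2\varphi_{\theta}(\varphi+\varphi_{\theta\theta})=0,
\]
either $\varphi_{\theta}=0$ (in which case $\Phi = \varphi^{2}$ and the bound by $\max \varphi^{2}$ is immediate) or $\varphi_{\theta\theta}=-\varphi$. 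The work is to rule out the second case by a contradiction with $\Phi_{t}\ge 0$.

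Assume therefore $\varphi_{\theta\theta}=-\varphi$ at $(\theta_{0},t_{0})$. Substituting into \eqref{nt} yields the pointwise simplification $\varphi_{t}=-n\lambda(t)\varphi$. Next I would differentiate \eqref{nt} in $\theta$ to get an expression for $(\varphi_{\theta})_{t}$; after using $\varphi_{\theta\theta}+\varphi-\lambda\varphi^{1/n}=-\lambda\varphi^{1/n}$ most of the algebraic mess collapses to
\[
(\varphi_{\theta})_{t} = -n\lambda\,\varphi_{\theta} + n\varphi^{1-1/n}\bigl(\varphi_{\theta\theta\theta}+\varphi_{\theta}\bigr).
\]
Combining these gives
\[
\Phi_{t} = 2\varphi\varphi_{t}+2\varphi_{\theta}(\varphi_{\theta})_{t}
= -2n\lambda\,\Phi + 2n\varphi^{1-1/n}\,\varphi_{\theta}\bigl(\varphi_{\theta\theta\theta}+\varphi_{\theta}\bigr).
\]

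The concluding step is to observe that $\Phi_{\theta\theta}\le 0$ together with $\varphi_{\theta\theta}=-\varphi$ reduces to exactly $\varphi_{\theta}(\varphi_{\theta\theta\theta}+\varphi_{\theta})\le 0$ after the $-2\varphi^{2}+2\varphi\varphi_{\theta\theta}=-2\varphi^{2}+2\varphi\cdot(-\varphi)$ terms (these come from $\Phi_{\theta\theta}=2\varphi_{\theta}^{2}+2\varphi\varphi_{\theta\theta}+2\varphi_{\theta\theta}^{2}+2\varphi_{\theta}\varphi_{\theta\theta\theta}$) are collected using $\varphi_{\theta\theta}^{2}=\varphi^{2}$. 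Since $\varphi>0$ and $\lambda(t)\ge M_{1}>0$ by Lemma \ref{lamj}, this forces $\Phi_{t}\le -2n\lambda\Phi < 0$, contradicting $\Phi_{t}\ge 0$ unless $\Phi(\theta_{0},t_{0})=0$. Therefore the only way to realize an interior maximum of $\Phi$ is through $\varphi_{\theta}=0$, giving $\Phi\le\varphi^{2}$ there, and the claimed estimate follows.

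The main obstacle I expect is the bookkeeping in computing $(\varphi_{\theta})_{t}$: one has to be careful with the factor $\varphi^{1-1/n}$ and the $\lambda\varphi^{1/n}$ term so that the cancellations leading to the clean form $-n\lambda\varphi_{\theta}+n\varphi^{1-1/n}(\varphi_{\theta\theta\theta}+\varphi_{\theta})$ actually appear; everything else is a routine application of the maximum principle.
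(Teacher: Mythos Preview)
Your argument is correct and follows exactly the same maximum-principle route as the paper: at an interior maximum with $\varphi_{\theta}\neq 0$ one has $\varphi_{\theta\theta}+\varphi=0$, and the computation collapses to $\Phi_{t}=n\varphi^{1-1/n}\Phi_{\theta\theta}-2n\lambda\Phi<0$, a contradiction. Two cosmetic remarks: the appeal to Lemma~\ref{lamj} is unnecessary since $\lambda(t)>0$ is immediate from its definition, and the caveat ``unless $\Phi(\theta_{0},t_{0})=0$'' can be dropped because $\Phi\ge\varphi^{2}>0$ always.
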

\begin{proof}
Consider an arbitrary time $t > 0$ within the solution's existence interval $[0, T)$.
Assume $\Phi$ attains its maximum value over the domain $S^1\times [0, t]$ at some point $(\theta_0, t_0)$.
If $t_0 = 0$, the result holds trivially, so we focus on the case where $t_0 > 0$.

We now claim that $\varphi_{\theta}(\theta_0,t_0)=0$ and hence the conclusion stands proven.
Suppose to the contrary that $\varphi_\theta(\theta_0, t_0) \neq 0$.
At this maximum point,  we have $\varphi_\theta(\varphi_{\theta\theta} + \varphi) = 0$,
which would imply $\varphi_{\theta\theta} + \varphi = 0$. A direct calculation shows that at $(\theta_{0},t_{0})$, we have
\[
\frac{\partial \Phi}{\partial t} = n\varphi^{1-\frac{1}{n}}\Phi_{\theta\theta} - 2n\lambda(t)\varphi^{2} - 2n\lambda(t)(\varphi_{\theta})^{2}< 0,
\]
which contradicts the requirement that $\partial \Phi / \partial t \geq 0$ at a point where $\Phi$ achieves its maximum. This completes the proof.
\end{proof}

\begin{lemma}\label{new}
Under the flow \eqref{fl2}, assume that at some point $(\theta_{0},t_{0})\in S^{1}\times(0,T)$, we have
\begin{equation*}
\varphi(\theta_{0},t_{0})=\max\limits_{S^{1}\times[0,t_{0}]}\varphi(\theta,t).
\end{equation*}
Then for any small $\epsilon>0$, there exists $\eta>0$ depending only on $\epsilon$ such that
\begin{equation*}
(1-\epsilon)\varphi(\theta_{0},t_{0})\le\varphi(\theta,t_{0})+\epsilon\sqrt{c}
\end{equation*}
holds for all $\forall\theta\in(\theta_{0}-\eta,\theta_{0}+\eta)$, where $c$ is a constant depending solely on the initial curve.
\end{lemma}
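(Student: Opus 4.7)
The plan is to deduce a uniform gradient bound on $\varphi$ from Lemma \ref{est} and then integrate it along the time slice $\{t=t_0\}$.

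First, write $\varphi_0 := \varphi(\theta_0,t_0)$ and $c := \max_{S^1\times\{0\}}\Phi$, the latter depending only on the initial curve. Since by hypothesis $\varphi(\theta_0,t_0)=\max_{S^1\times[0,t_0]}\varphi$, Lemma \ref{est} gives
$$\Phi(\theta,s) \le \max\{\varphi_0^{2}, c\} \le (\varphi_0+\sqrt{c})^2, \quad (\theta,s)\in S^1\times[0,t_0].$$
Since $(\varphi_\theta)^2 \le \Phi$, this yields the pointwise gradient bound $|\varphi_\theta(\theta,s)| \le \varphi_0 + \sqrt{c}$ throughout $S^1\times[0,t_0]$.

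Next, integrating this bound on the time slice $t=t_0$ from $\theta_0$ to $\theta$ gives
$$\varphi_0 - \varphi(\theta,t_0) \le |\theta-\theta_0|(\varphi_0+\sqrt{c}).$$
Setting $\eta := \epsilon$, any $\theta$ with $|\theta-\theta_0|<\eta$ satisfies $\varphi_0 - \varphi(\theta,t_0) < \epsilon(\varphi_0+\sqrt{c})$, which rearranges to $(1-\epsilon)\varphi_0 \le \varphi(\theta,t_0) + \epsilon\sqrt{c}$, the required inequality, with $\eta=\epsilon$ depending only on $\epsilon$.

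The main obstacle is conceptual rather than computational: one must avoid dividing by $\varphi_0$ --- which could in principle be very small and is not controlled a priori --- and instead use the crude estimate $\sqrt{\max\{\varphi_0^2,c\}} \le \varphi_0+\sqrt{c}$. This additively separates the unknown $\varphi_0$ from the known constant $\sqrt{c}$ and is precisely what generates the additive error term $\epsilon\sqrt{c}$ on the right-hand side of the statement, absorbing all dependence on the initial data into a single quantity $c$.
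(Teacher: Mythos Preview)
Your proof is correct and follows the same route as the paper: both derive the gradient bound $|\varphi_\theta|\le \varphi_0+\sqrt{c}$ from Lemma~\ref{est} (the paper via $\max\{\varphi_0^2,c\}\le \varphi_0^2+c$ and then $\sqrt{\varphi_0^2+c}\le \varphi_0+\sqrt{c}$, you via the equivalent $\max\{\varphi_0^2,c\}\le(\varphi_0+\sqrt{c})^2$), integrate along $t=t_0$, and set $\eta=\epsilon$.
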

\begin{proof}
By Lemma \ref{est},we have
\begin{align*}
\varphi(\theta_{0},t_{0})&=\varphi(\theta,t_{0})+\int^{\theta_{0}}_{\theta}\varphi_{\theta}(\theta,t_{0})d\theta\\
&\le\varphi(\theta,t_{0})+\lvert\theta_{0}-\theta\lvert\max\limits_{\theta\in S^{1}}\lvert\varphi_{\theta}(\theta,t_{0})\lvert\\
&\le\varphi(\theta,t_{0})+\lvert\theta_{0}-\theta\lvert\sqrt{\max\limits_{S^{1}\times[0,t_{0}]}\varphi^{2}(\theta,t)+c}\\
&=\varphi(\theta,t_{0})+\lvert\theta_{0}-\theta\lvert\sqrt{\varphi^{2}(\theta_{0},t_{0})+c}\\
&\le\varphi(\theta,t_{0})+\eta\varphi(\theta_{0},t_{0})+\eta\sqrt{c},
\end{align*}
where the last inequality uses $|\theta - \theta_0| < \eta$. Rearranging terms yields
\begin{equation*}
(1 - \eta)\varphi(\theta_0, t_0) \leq \varphi(\theta, t_0) + \eta \sqrt{c}.
\end{equation*}
The result follows by setting $\eta=\epsilon$.
\end{proof}
\begin{lemma}\label{lok}(Lower bound of the curvature)
Under the flow \eqref{fl2}, there exists a constant $M_{3}>0$ ,which is independent of time, such that
\begin{equation*}
\kappa(\theta,t)\ge M_{3}>0,\quad\forall(\theta,t)\in S^{1}\times[0,T).
\end{equation*}
Therefore, the evolving curve is uniformly convex on $[0,T).$
\end{lemma}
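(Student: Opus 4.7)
The plan is to derive a time-independent upper bound on $\varphi:=\rho^{n}=\kappa^{-n}$, since this is equivalent to the desired lower bound $\kappa\ge(\sup\varphi)^{-1/n}$. When $0<n<1$, the maximum principle applied directly to \eqref{nt} already suffices: at any interior spacetime maximum of $\varphi$ one has $\varphi_{\theta\theta}\le0$ and $\varphi_{t}\ge0$, so $\varphi^{1-1/n}\ge\lambda(t)\ge M_{1}$, and since the exponent $1-1/n$ is negative this immediately yields $\varphi\le M_{1}^{-n/(1-n)}$.

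The real difficulty lies in the range $n>1$, where the same calculation only produces a lower bound on the maximum and is useless. My plan here is to trade the pointwise maximum principle for the integral bound $\int_{0}^{2\pi}\varphi^{1+1/n}d\theta=2A(0)\lambda(t)\le 2A(0)M_{2}$ coming from Lemma \ref{lamj}, combined with the neighbourhood control supplied by Lemma \ref{new}. Fix any $t^{*}\in[0,T)$ and choose $(\theta_{0},t_{0})\in S^{1}\times[0,t^{*}]$ where $\varphi$ attains its spacetime maximum over this slab; the case $t_{0}=0$ gives a bound in terms of the initial data, so assume $t_{0}>0$. Applying Lemma \ref{new} with $\epsilon=1/2$ yields a fixed $\eta>0$, depending only on $c$ (hence only on the initial curve via Lemma \ref{est}), such that
\[
\varphi(\theta,t_{0})\ge\tfrac{1}{2}\varphi(\theta_{0},t_{0})-\tfrac{1}{2}\sqrt{c}\qquad\text{for all }\theta\in(\theta_{0}-\eta,\theta_{0}+\eta).
\]
Provided $\varphi(\theta_{0},t_{0})\ge 2\sqrt{c}$ (otherwise the bound is already $2\sqrt{c}$), the right-hand side is at least $\varphi_{\max}/4$ on this arc, and substituting into the integral bound gives
\[
2\eta\left(\tfrac{\varphi_{\max}}{4}\right)^{1+1/n}\le\int_{0}^{2\pi}\varphi^{1+1/n}d\theta\le 2A(0)M_{2},
\]
whence $\varphi_{\max}\le 4\bigl(A(0)M_{2}/\eta\bigr)^{n/(n+1)}$, a bound independent of $t^{*}$.

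The main obstacle is precisely this $n>1$ step: the destabilising term $\varphi$ in \eqref{nt} dominates the stabilising term $\lambda\varphi^{1/n}$ whenever $\varphi$ is large, so the single-point maximum principle cannot close, and one must instead use the fact that a would-be blow-up of $\varphi$ is incompatible with $\lambda(t)$ staying bounded above. The role of Lemmas \ref{est} and \ref{new} is exactly to convert the $C^{0}$-max of $\varphi$ into an $L^{1}$ estimate on a uniformly sized arc that can then be played against Lemma \ref{lamj}. Once an upper bound on $\varphi$ is in hand, setting $M_{3}:=(\sup\varphi)^{-1/n}>0$ gives the desired uniform curvature lower bound and therefore uniform convexity on $[0,T)$.
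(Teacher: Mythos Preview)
Your argument is correct and rests on the same core mechanism as the paper's proof: Lemma~\ref{new} lets you transfer a pointwise blow-up of $\varphi$ to a lower bound on $\varphi$ over an arc of fixed length, which then contradicts a uniform integral bound. The only differences are cosmetic. The paper does not split into cases; it argues uniformly in $n$ by contradiction, using the length bound $L(t)=\int_{0}^{2\pi}\varphi^{1/n}\,d\theta\le L(0)$ from Lemma~\ref{lajie} as the integral quantity that would be forced to diverge. You instead use $\int_{0}^{2\pi}\varphi^{1+1/n}\,d\theta=2A(0)\lambda(t)\le 2A(0)M_{2}$ from Lemma~\ref{lamj} in the range $n>1$, and handle $0<n<1$ by a direct maximum-principle computation that bypasses Lemmas~\ref{est} and~\ref{new} entirely. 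The latter is a pleasant shortcut in that sub-range, while the paper's choice of the length integral has the advantage of giving a single argument for all $n$.
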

\begin{proof}
First, we assert that there exists a constant $M_{4}>0$, independentof time, such that
\begin{equation}\label{nuu}
\max\limits_{S^{1}\times[0,T)}\varphi(\theta,t)\le M_{4}.
\end{equation}
Suppose, for contradiction, that \eqref{nuu} does not hold, then we can find a sequence $\{t_{i}\}^{\infty}_{i=1}\rightarrow T$ such that $\max_{S^{1}}\varphi(\theta,t_{i})\to\infty$ as $t\rightarrow\infty$. By Lemma \ref{new}, one has $L(t_{i})=\int^{2\pi}_{0}\varphi^{\frac{1}{n}}(\theta,t_{i})d\theta\to\infty$ as $t\rightarrow\infty$. This stands in contradiction that $L(t)\le L(0)$, so our initial assertion is valid. Then by \eqref{nuu}, it follows that
\begin{equation}\label{ke}
\bigg(\frac{1}{\min\limits_{S^{1}\times[0,T)}\kappa(\theta,t)}\bigg)^{n}=\max\limits_{S^{1}\times[0,T)}\omega(\theta,t)\le M_{4},
\end{equation}
which shows that the curvature $\kappa(\theta,t)$ has a time-independent positive lower bound. This completes the proof.
\end{proof}
By mimicking the proof in \cite{Mao-Pan-Wang}, we can establish the short time existence of flow \eqref{fl2}.
\begin{theorem}
There exists $T > 0$ such that the flow \ref{fl2} admits a unique solution $X(\theta,t) \in S^1 \times [0,T]$, where $T$ is a small positive time.
\end{theorem}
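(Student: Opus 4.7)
The plan is to reduce the vector-valued flow \eqref{fl2} to the scalar quasilinear parabolic equation \eqref{nt} for $\varphi = \kappa^{-n}$ on $S^{1}$, and then run a Banach fixed-point argument in a parabolic Hölder space. Since $X_{0}$ is smooth and strictly convex, the initial datum $\varphi_{0}=\kappa_{0}^{-n}$ lies in $C^{\infty}(S^{1})$ and satisfies $0<m\le\varphi_{0}\le M$ on $S^{1}$ for constants $m,M>0$. Because the recovery of $X(\theta,t)$ from $\varphi$ proceeds through $\rho=\varphi^{1/n}$, the support-function equation $p_{\theta\theta}+p=\rho$, and the standard Frenet reconstruction (with the tangential component in \eqref{fl2} chosen precisely so that the $\theta$-parametrization is preserved), short-time existence for \eqref{fl2} is equivalent to short-time existence for \eqref{nt}.

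Fix $\alpha\in(0,1)$ and introduce
\begin{equation*}
\mathcal{B}_{\delta,T}=\left\{\tilde\varphi\in C^{2+\alpha,1+\alpha/2}(S^{1}\times[0,T]):\tilde\varphi(\cdot,0)=\varphi_{0},\ \|\tilde\varphi-\varphi_{0}\|_{C^{2+\alpha,1+\alpha/2}}\le\delta\right\},
\end{equation*}
with $\delta$ small enough that every $\tilde\varphi\in\mathcal{B}_{\delta,T}$ satisfies $m/2\le\tilde\varphi\le 2M$ pointwise. For such $\tilde\varphi$ define $\tilde\lambda(t)=\frac{1}{2A(0)}\int_{0}^{2\pi}\tilde\varphi^{1+1/n}\,d\theta$ and solve the linear initial-value problem
\begin{equation*}
\partial_{t}\varphi-n\tilde\varphi^{1-1/n}\varphi_{\theta\theta}=n\tilde\varphi^{2-1/n}-n\tilde\lambda(t)\tilde\varphi,\qquad \varphi(\cdot,0)=\varphi_{0},
\end{equation*}
on $S^{1}\times[0,T]$. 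The coefficient $n\tilde\varphi^{1-1/n}$ is bounded between positive constants (for both $0<n<1$ and $n>1$ thanks to the pointwise bounds on $\tilde\varphi$), so the equation is uniformly parabolic; the right-hand side lies in $C^{\alpha,\alpha/2}$. Classical Schauder theory on the torus yields a unique solution $\varphi\in C^{2+\alpha,1+\alpha/2}$ together with an estimate of its norm in terms of $\varphi_{0}$ and the coefficient norms. This defines the solution map $\Psi:\tilde\varphi\mapsto\varphi$.

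Then I would verify, for $T$ sufficiently small, that $\Psi$ maps $\mathcal{B}_{\delta,T}$ into itself and is a contraction. For self-mapping, the equation satisfied by $\varphi-\varphi_{0}$ has zero initial datum and a right-hand side bounded in $C^{\alpha,\alpha/2}$; the Schauder estimate gives $\|\varphi-\varphi_{0}\|_{C^{2+\alpha,1+\alpha/2}}\le CT^{\alpha/2}$, which is $\le\delta$ for $T$ small. For contraction, the difference $\Psi(\tilde\varphi_{1})-\Psi(\tilde\varphi_{2})$ satisfies a linear equation whose coefficient difference and forcing difference are both Lipschitz in $\tilde\varphi_{1}-\tilde\varphi_{2}$ (with constants depending only on $m$, $M$, and $\delta$), and another Schauder estimate introduces a factor of $T^{\alpha/2}$ that can be made $<1/2$. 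The Banach fixed-point theorem then supplies a unique $\varphi\in\mathcal{B}_{\delta,T}$ solving \eqref{nt}. Parabolic bootstrapping, using the smoothness of $\varphi_{0}$, promotes the regularity of $\varphi$, and hence of $X(\theta,t)$, to $C^{\infty}$.

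The principal technical obstacle is the careful treatment of the nonlocal term $\tilde\lambda(t)$ within the contraction step. Because $\tilde\lambda$ depends only on an integral of $\tilde\varphi^{1+1/n}$, it is Lipschitz in $\tilde\varphi$ in a norm far weaker than $C^{2+\alpha,1+\alpha/2}$, so it contributes only a harmless lower-order forcing to the linear problem and does not threaten the contraction once the uniform positivity $\tilde\varphi\ge m/2$ is locked in by the size of $\delta$. This is the one place where the present argument departs from the textbook short-time existence proof for local quasilinear curve flows; following \cite{Mao-Pan-Wang}, this nonlocal dependence is absorbed by precisely the Lipschitz estimate just described.
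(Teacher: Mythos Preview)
The paper does not actually supply a proof of this theorem; it merely states that short-time existence follows ``by mimicking the proof in \cite{Mao-Pan-Wang}.'' Your outline---reducing to the scalar quasilinear parabolic equation \eqref{nt} for $\varphi=\kappa^{-n}$ and running a Banach fixed-point argument in parabolic H\"older spaces, with the nonlocal term $\lambda(t)$ absorbed as a Lipschitz lower-order forcing---is precisely the approach of \cite{Mao-Pan-Wang} adapted to the present nonlinearity, so your proposal is consistent with what the paper intends.

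One technical point worth tightening: the claimed self-mapping estimate $\|\varphi-\varphi_{0}\|_{C^{2+\alpha,1+\alpha/2}}\le CT^{\alpha/2}$ does not follow directly from the Schauder estimate for zero initial data, since the forcing $n\tilde\varphi^{1-1/n}(\varphi_{0})_{\theta\theta}+n\tilde\varphi^{2-1/n}-n\tilde\lambda\tilde\varphi$ is bounded but not small in $C^{\alpha,\alpha/2}$ as $T\to 0$. The standard remedy (used implicitly in \cite{Mao-Pan-Wang} and similar references) is either to take the ball of a fixed large radius $R$ determined by the Schauder constant rather than a small $\delta$, or to work in a weighted H\"older space in which zero-initial-data solutions do pick up a factor of $T^{\beta}$. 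Also note that recovering a closed curve from $\rho=\varphi^{1/n}$ requires the closing condition $\int_{0}^{2\pi}\rho(\theta,t)e^{i\theta}\,d\theta=0$, which is preserved by \eqref{rt} since $\frac{d}{dt}\int\rho e^{i\theta}\,d\theta=-\lambda(t)\int\rho e^{i\theta}\,d\theta$; you should state this explicitly.
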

\section{Long time Existence }
In this section, we will use the maximum principle to prove the long time existence of the flow \eqref{fl2}.
\begin{theorem}(Long-time existence)
The evolution equation \eqref{fl2} has a long time solution on $S^1\times[0,\infty)$.
\end{theorem}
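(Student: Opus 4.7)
The plan is to use the standard parabolic continuation argument: let $T\in(0,\infty]$ denote the maximal existence time of the solution to \eqref{fl2} furnished by short-time existence, assume for contradiction that $T<\infty$, and derive uniform control of $\kappa$ and of its spatial derivatives on $[0,T)$ that lets the short-time existence result be re-applied past $T$.

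The uniform positive lower bound $\kappa\ge M_{3}$ is already supplied by Lemma \ref{lok}, so the core task is to produce an upper bound on $\kappa$ on $[0,T)$. For this I would apply the maximum principle to the evolution equation \eqref{kt}. At a spatial maximum of $\kappa$ one has $\kappa_{\theta}=0$ and $\kappa_{\theta\theta}\le 0$, and since
\[
(\kappa^{-n})_{\theta\theta}=-n\kappa^{-n-1}\kappa_{\theta\theta}+n(n+1)\kappa^{-n-2}\kappa_{\theta}^{2},
\]
at such a point $(\kappa^{-n})_{\theta\theta}\ge 0$. Substituting into \eqref{kt} yields the pointwise differential inequality
\[
\frac{d}{dt}\kappa_{\max}(t)\le \lambda(t)\,\kappa_{\max}(t) - \kappa_{\max}(t)^{2-n}.
\]
Using $\lambda(t)\le M_{2}$ from Lemma \ref{lamj}, an elementary ODE comparison splits into two cases: for $0<n<1$ the superlinear sink $\kappa^{2-n}$ forces $\kappa_{\max}$ to remain globally bounded; for $n>1$ discarding the negative term gives at worst $\kappa_{\max}(t)\le \kappa_{\max}(0)\,e^{M_{2} t}$, which is bounded on any finite interval.

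With $\kappa$ now pinched between two positive constants on $[0,T)$, the scalar equation \eqref{nt} for $\varphi=\rho^{n}$ becomes uniformly parabolic with smooth, bounded coefficients, the nonlocal quantity $\lambda(t)$ entering only as a bounded, spatially constant zero-order factor. Standard Krylov--Safonov and Schauder estimates then bootstrap $C^{k,\alpha}$ bounds on $\varphi$, and hence on $X(\cdot,t)$, for every $k\ge 0$ up to time $T$. This produces a smooth limiting curve at $t=T$ to which the short-time existence theorem can be reapplied, yielding a solution on $[0,T+\delta)$ and contradicting the maximality of $T$. Hence $T=\infty$.

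The main obstacle is the case $n>1$: the differential inequality above does not give a uniform-in-time bound on $\kappa_{\max}$, only exponential growth. For proving merely long-time existence this is harmless, since an exponential bound is still finite on every finite interval, but it signals that stronger uniform control, needed later for the convergence analysis, will have to come from a separate argument. A minor technical point is to verify that the nonlocal coefficient $\lambda(t)$ does not disrupt the maximum principle derivation---this is immediate because $\lambda(t)$ is spatially constant at each time, but deserves being stated explicitly when writing the pointwise inequality.
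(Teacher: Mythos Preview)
Your proposal is correct and follows essentially the same route as the paper: apply the maximum principle to the curvature evolution \eqref{kt} to obtain $\frac{d}{dt}\kappa_{\max}\le M_{2}\kappa_{\max}$ and hence the exponential bound $\kappa_{\max}(t)\le \kappa_{\max}(0)e^{M_{2}t}$, which together with the lower bound from Lemma~\ref{lok} rules out finite-time blow-up. Your write-up is in fact more complete---the paper stops at the exponential bound and simply asserts that ``the singularity will never happen,'' without spelling out the continuation/regularity step; your extra remark that for $0<n<1$ the sink term $\kappa^{2-n}$ already yields a time-uniform bound anticipates what the paper establishes later (for all $n$, and by a considerably more elaborate energy argument) in Lemma~\ref{kupj}.
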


\begin{proof}
Let $\theta_{\ast}$ be the point such that $\kappa(\cdot,t)$ attains its maximum value, that is
$$\kappa(\theta_{\ast},t)=\max\{\kappa(\theta,t)|\theta\in S^1\}\triangleq\kappa_{\max}(t).$$
At the point $(\theta_{\ast},t)$, we have
\begin{align*}
\frac{\partial \kappa}{\partial t} &= \kappa^2[\kappa^{-1}\lambda(t)-\kappa^{-n}-(\kappa^{-n})_{\theta\theta}]\\
&=\kappa\lambda(t)-\kappa^{2-n}+n\kappa^{1-n}\kappa_{\theta\theta}-n(n+1)\kappa^{-n}(\kappa_{\theta})^2\\
&\le\kappa\lambda(t)-\kappa^{2-n}\le  M_2 \kappa,
\end{align*}
where $M_2$ is the upper bound of $\lambda(t)$(see Lemma \ref{lamj}). So we obtain
$$\frac{d\kappa_{\text{max}}}{dt} \leq M_2 \kappa_{\text{max}}.$$
Solving this differential inequality yields
\begin{equation}\label{ks0}
\kappa_{\text{max}}(t) \leq \kappa_{\text{max}}(0)  e^{M_2 t}.
\end{equation}
Therefore, the singularity will never happen as time goes. This completes the proof.
\end{proof}
Although long-time existence for flow \eqref{fl2} on
$S^1\times[0,\infty)$ follows from \eqref{ks0}, we can obtain a better upper bound estimate for the curvature.
\begin{lemma}\label{rhodd}
Under the flow \eqref{fl2}, there exists $\theta(t)\in S^1$ such that
\begin{equation}\label{rhod}
\rho(\theta(t),t)=\sqrt{\frac{A(0)}{\pi}},\quad \forall t\in[0,\infty).
\end{equation}
\end{lemma}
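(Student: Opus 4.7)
The aim is to apply the intermediate value theorem to the continuous function $\theta \mapsto \rho(\theta, t)$ on $S^1$. Since $A(t) \equiv A(0)$ by Lemma \ref{llaa}, it suffices to show that for every $t \in [0, \infty)$,
\[
\min_{\theta \in S^1} \rho(\theta, t) \;\leq\; \sqrt{\frac{A(0)}{\pi}} \;\leq\; \max_{\theta \in S^1} \rho(\theta, t).
\]

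For the upper inequality I would use the elementary identity $L(t) = \int_0^{2\pi} \rho(\theta, t)\,d\theta$, which gives $\max_\theta \rho(\theta, t) \geq L(t)/(2\pi)$, together with the isoperimetric lower bound $L(t) \geq \sqrt{4\pi A(0)}$ from Lemma \ref{lajie}. Dividing by $2\pi$ then yields $\max_\theta \rho(\theta, t) \geq \sqrt{A(0)/\pi}$ at once.

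For the lower inequality I would first translate the origin at time $t$ to lie inside the strictly convex curve $X(\cdot, t)$ so that the support function satisfies $p(\cdot, t) > 0$; this is harmless because $\rho$, $L$, and $A$ are all translation-invariant. I would then invoke the standard identity
\[
2A(t) = \int_0^{L} p\,ds = \int_0^{2\pi} p(\theta, t)\,\rho(\theta, t)\,d\theta,
\]
which is recorded in Lemma \ref{eeg} combined with $ds = \rho\,d\theta$. Setting $c = \min_\theta \rho(\theta, t)$ and using $L(t) = \int_0^{2\pi} p(\theta,t)\,d\theta$ together with $L(t) \geq 2\sqrt{\pi A(0)}$ gives the chain
\[
2A(0) = 2A(t) \;\geq\; c\int_0^{2\pi} p\,d\theta \;=\; c\,L(t) \;\geq\; 2c\sqrt{\pi A(0)},
\]
which forces $A(0) \geq \pi c^2$, i.e., $c \leq \sqrt{A(0)/\pi}$, as desired.

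The two inequalities combined with IVT produce the required $\theta(t) \in S^1$. I do not anticipate any substantial obstacle; the only mildly delicate point is ensuring positivity of the support function, resolved by the translation described above.
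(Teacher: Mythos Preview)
Your proof is correct. The upper bound $\rho_{\max}(t)\ge\sqrt{A(0)/\pi}$ is obtained exactly as in the paper, via $L=\int_0^{2\pi}\rho\,d\theta$ and the isoperimetric inequality. The lower bound, however, is handled differently: the paper invokes Gage's inequality $\int_0^L\kappa^2\,ds\ge\pi L/A$ (equivalently $\int_0^{2\pi}\kappa\,d\theta\ge\pi L/A$) to force $2\pi/\rho_{\min}\ge\pi L/A\ge\pi\sqrt{4\pi A(0)}/A(0)$, whereas you use only the elementary Cauchy formulas $2A=\int_0^{2\pi}p\rho\,d\theta$ and $L=\int_0^{2\pi}p\,d\theta$ together with the isoperimetric inequality, after translating so that $p>0$. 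Your route is more self-contained in that it avoids the nontrivial Gage inequality entirely; the price is the harmless translation step to secure positivity of the support function. Both arguments then conclude via the intermediate value theorem.
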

\begin{proof}
Set $\min\{\rho(\theta,t)|\theta\in S^1\}\triangleq\rho_{\min}(t)$ and $\max\{\rho(\theta,t)|\theta\in S^1\}\triangleq\rho_{\max}(t).$ By the isoperimetric inequality $\int^{2\pi}_{0}\rho d\theta=L(t)\ge\sqrt{4\pi A(0)}$, one gets
\[\rho_{\max}(t)\ge\frac{1}{2\pi}\int^{2\pi}_{0}\rho d\theta=\frac{L(t)}{2\pi}\ge\sqrt{\frac{A(0)}{\pi}}.\]
On the other hand, from the Gage inequality, it follows that
\[\frac{2\pi}{\rho_{\min}(t)}\ge\int^{2\pi}_{0}\frac{1}{\rho(\theta,t)} d\theta=\int^{2\pi}_{0}\kappa(\theta,t) d\theta\ge\frac{\pi L(t)}{A(0)}\ge\frac{\pi\sqrt{4\pi A(0)}}{A(0)}.\]
Thus,
\[\rho_{\min}(t)\le\sqrt{\frac{A(0)}{\pi}}\le\rho_{\max}(t).\]
By the continuity of $\rho$, we can obtain \eqref{rhod}.
\end{proof}

\begin{lemma}\label{kupj}
There exists a positive constant $J$, depending only on $n$ and $X_0$, such that
\begin{equation}
\kappa\le J,\quad \forall (\theta,t)\in S^1\times[0,\infty).
\end{equation}
\end{lemma}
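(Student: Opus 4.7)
The plan is to apply the parabolic maximum principle to $\rho = 1/\kappa$, whose evolution is given by \eqref{rt}. At any spatial minimum of $\rho(\cdot,t)$, we have $\rho_\theta = 0$ and $\rho_{\theta\theta}\ge 0$, so $(\rho^n)_{\theta\theta} = n\rho^{n-1}\rho_{\theta\theta}\ge 0$. Combined with the upper bound $\lambda(t)\le M_2$ from Lemma~\ref{lamj}, this yields, in the viscosity sense,
\[
\frac{d\rho_{\min}}{dt}\ \ge\ \rho_{\min}^n - M_2\,\rho_{\min}\ =\ \rho_{\min}\bigl(\rho_{\min}^{n-1}-M_2\bigr).
\]

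For $0<n<1$ this closes the argument directly. Since $\rho^{n-1}\to\infty$ as $\rho\to 0^+$, the bracket is strictly positive whenever $\rho_{\min}<M_2^{-1/(1-n)}$, so a standard ODE barrier comparison gives
\[
\rho_{\min}(t)\ \ge\ \min\bigl\{\rho_{\min}(0),\,M_2^{-1/(1-n)}\bigr\}>0,
\]
and therefore $\kappa \le J$ with $J = \max\{\kappa_{\max}(0),\,M_2^{1/(1-n)}\}$, depending only on $n$ and $X_0$.

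For $n>1$ the pointwise comparison above fails, because the bracket is negative for small $\rho_{\min}$ and the associated comparison ODE has $0$ as an attractor. My plan in this regime is to argue by contradiction in the spirit of Lemma~\ref{lok}: assume $\rho_{\min}(t_k)\to 0$ along a sequence $t_k$, then combine the $C^0$ bound on $\varphi=\rho^n$ from Lemma~\ref{lok}, the gradient bound $|\varphi_\theta|\le\sqrt{C}$ from Lemma~\ref{est}, the point-value normalization $\rho(\theta(t),t)=\sqrt{A(0)/\pi}$ from Lemma~\ref{rhodd}, and the two-sided bounds on $\lambda(t)$ from Lemma~\ref{lamj} to derive a contradiction with the preserved quantities $A(0)$ and $L(t)\ge\sqrt{4\pi A(0)}$ (Lemma~\ref{lajie}).

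The hard part will be the case $n>1$: the diffusion coefficient $n\rho^{n-1}$ in \eqref{rt} degenerates exactly where a lower bound on $\rho$ is most needed, so the single-point maximum principle on $\rho_{\min}$ cannot be used in isolation, and the time-independent constant $J$ must be extracted from the combined global gradient and integral information of the preceding lemmas rather than from a single pointwise inequality.
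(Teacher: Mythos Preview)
Your argument for $0<n<1$ is correct and simpler than the paper's: the paper does not split cases by $n$ and instead runs a single energy argument valid for all $n>0$, whereas your barrier ODE at the spatial minimum of $\rho$ settles that sub-range immediately. That is a genuine simplification in the regime $0<n<1$.

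For $n>1$, however, there is a real gap. You list the upper bound on $\varphi$, the gradient bound $|\varphi_\theta|\le\sqrt{C}$, the point-value $\rho(\theta(t),t)=r_0$, and the bounds on $\lambda$, but these ingredients alone do not preclude $\rho$ from touching $0$ at a single point while remaining bounded away from $0$ elsewhere. A uniform Lipschitz bound on $\varphi$ only yields $\varphi(\theta)\ge r_0^{\,n}-K|\theta-\theta(t)|$, which may well be negative and hence gives no lower bound; the integral constraints $L\ge\sqrt{4\pi A(0)}$ and $A=A(0)$ are likewise insensitive to a localized dip of $\rho$. So the contradiction you promise cannot be extracted from the listed tools as they stand.

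What the paper supplies, and what your sketch is missing, is an \emph{energy decay} step. Setting $E(t)=\tfrac12\int_0^{2\pi}\rho^2\,d\theta$ and differentiating along the flow gives $E'(t)=-n\int_0^{2\pi}\rho^{n-1}\rho_\theta^2\,d\theta+O(1)$; since $E(t)\ge A(0)>0$, this forces $\int_0^\infty\int_0^{2\pi}\rho^{n-1}\rho_\theta^2\,d\theta\,dt<\infty$. One therefore finds, in every interval of fixed length $b$, a time $t_i$ with $\int_0^{2\pi}\rho^{n-1}\rho_\theta^2\,d\theta<\epsilon$. At such a time, H\"older's inequality together with the calibration point $\rho(\theta_i,t_i)=r_0$ from Lemma~\ref{rhodd} gives $\rho_{\min}(t_i)\ge r_0/2$. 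Between successive $t_i$ one interpolates using the crude bound $d\rho_{\min}/dt\ge -M_2 J_1$ (obtained from your own inequality by discarding the $\rho_{\min}^n$ term), and the choice $b=r_0/(8M_2J_1)$ keeps $\rho_{\min}\ge r_0/4$ for all large $t$. This energy--interpolation mechanism is precisely the missing idea in your plan for $n>1$.
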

\begin{proof}
Lemma \ref{lok} guarantees a uniform lower bound for the curvature, i.e.,$\kappa(\theta,t)\ge M_3>0$, which implies $\rho(\theta,t)\le J_1\triangleq1/M_3$. Define the energy functional
\begin{equation*}
E(t)=\frac{1}{2}\int^{2\pi}_{0}\rho^2d\theta.
\end{equation*}
Differentiating with respect to time and using the evolution equation of $\rho$ yields
\begin{align*}
\frac{dE}{dt}=\int^{2\pi}_{0}\rho\rho_td\theta&=\int^{2\pi}_{0}\rho[(\rho^{n})_{\theta\theta}+\rho^n-\lambda(t)\rho]d\theta\\
&=-n\int^{2\pi}_{0}\rho^{n-1}\rho^{2}_{\theta}d\theta+\int^{2\pi}_{0}\rho^{n+1}d\theta-\lambda(t)\int^{2\pi}_{0}\rho^{2}d\theta.
\end{align*}
Since $\rho\le J_1$ and $\lambda(t)\le M_2$, we have the estimates
\begin{equation*}
\bigg|\int^{2\pi}_{0}\rho^{n+1}d\theta\bigg|\le2\pi J_1^{n+1},\quad\bigg|\lambda(t)\int^{2\pi}_{0}\rho^{2}d\theta\bigg|\le2\pi M_2J_1^2.
\end{equation*}
Set $J_2 = 2\pi J_1^{n+1} + 2\pi M_2J_1^2$. Then we have
\begin{equation}\label{eti}
\frac{dE}{dt} \leq -n f(t) + J_2.
\end{equation}
where $f(t) = \int_0^{2\pi} \rho^{n-1} \rho_\theta^2  d\theta \geq 0.$
The isoperimetric inequality $L^2 \geq 4\pi A$ and Cauchy-Schwarz imply
\[
\int_0^{2\pi} \rho^2  d\theta \geq \frac{L^2}{2\pi} \geq 2A,
\]
so $E(t) \geq A(0) > 0$. Integrating the differential inequality \eqref{eti} yields
\[
E(t) - E(0) \leq \int_0^t \left( -n f(z) + J_2 \right) dz,
\]
If $\int_0^\infty f(t)  dt = \infty$, then $E(t) - E(0) \leq -n \int_0^t f(z)  dz + J_2 t \to -\infty$ as $t \to \infty$, contradicting $E(t) \geq A(0)$. Hence
\[
\int_0^\infty f(t)  dt < \infty.
\]

Let $r_0 = \sqrt{A(0)/\pi}$ and choose $b = r_0/(8M_2 J_1)$. For any given $\epsilon>0$, there exists an integer $I_0 > 0$ such that for all $i \geq I_0$:
\[
 f(t_i) < \epsilon,\quad t_i \in [ib, (i+1)b].
\]
where the number $\epsilon$ will be chosen appropriately later on.
By Lemma \ref{rhodd},  there exists $\theta_i \in S^1$ such that $\rho(\theta_i, t_i) = r_0$. Let $\theta_* \in S^1$ satisfy $\rho(\theta_*, t_i) = \rho_{\min}(t_i)$. By the H$\ddot{o}$lder inequality, we have
\begin{align}
\rho^{\frac{n+1}{2}}_{\min}(t_i) - r_0^{\frac{n+1}{2}}&=\rho^{\frac{n+1}{2}}(\theta_{\ast},t_i)-\rho^{\frac{n+1}{2}}(\theta_{i},t_i)\notag\\
&=\int^{\theta_{\ast}}_{\theta_{i}}\frac{n+1}{2}\rho^{\frac{n-1}{2}}(\theta,t_i)\rho_{\theta}(\theta,t_i)d\theta\notag\\
&\ge-\frac{n+1}{2}\sqrt{2\pi f(t_i)}.
\end{align}
So,
\[\rho^{\frac{n+1}{2}}_{\min}(t_i)\ge r_0^{\frac{n+1}{2}}-\frac{(n+1)\sqrt{2\pi f(t_i)}}{2}\ge
r_0^{\frac{n+1}{2}}-\frac{(n+1)\sqrt{2\pi\epsilon}}{2}.\]
If we select a sufficiently small $\epsilon>0$, we will have
\[\rho_{\min}(t_i)\ge \frac{r_{0}}{2}.\]
For $t \in [(I_0+1)b, \infty)$, select $t_i \in [ib, (i+1)b]$ ($i \geq I_0$) such that $|t - t_i| \leq 2b$. Let $\theta(t) \in S^1$ be such that $\rho(\theta(t), t) = \rho_{\min}(t)$. At $\theta(t)$, the evolution equation (2.16) gives
\[
\frac{\partial \rho}{\partial t}(\theta(t), t) \geq\rho^{n}(\theta(t), t) -\lambda(t) \rho\ge-\lambda(t) \rho\ge -M_2 J_1,
\]
 Integrating over $[t_i, t]$ yields
\[
\rho_{\min}(t) - \rho_{\min}(t_i) \geq -M_2 J_1 (t - t_i) \geq -2M_2 J_1 b.
\]
Thus:
\[
\rho_{\min}(t) \geq \rho_{\min}(t_i) - 2M_2 J_1 b \geq r_0/2 - 2M_2 J_1 \cdot \frac{r_0}{8M_2 J_1} = \frac{r_0}{4}.
\]
On $[0, (I_0+1)b]$, the continuity and positivity of $\rho$ imply $\rho \geq c_0 > 0$ for some constant $c_0$. Thus, there exists $J^{-1}\triangleq\min\{c_0, r_0/4\}$ such that $\rho(\theta, t) \geq J^{-1}$ for all $(\theta, t)$, and consequently
\[
\kappa(\theta, t) = 1/\rho(\theta, t) \leq J.
\]
\end{proof}

\section{Convergence of the flow}
\begin{lemma}\label{isod}
(Convergence of the isoperimetric difference)
Under the flow \eqref{fl2}, the isoperimetric difference $L^{2}(t)-4\pi A(t)$ decreases and decays to zero exponentially as $t$ approaches infinity.
\end{lemma}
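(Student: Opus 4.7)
The plan is to show that $\dfrac{d}{dt}(L^{2}-4\pi A)\le -c\,(L^{2}-4\pi A)$ for some positive constant $c$ independent of time, and then conclude via Gronwall. Since $A(t)\equiv A(0)$ by Lemma \ref{llaa}, monotonicity of the isoperimetric difference will follow immediately once the differential inequality is in place (and indeed is already implicit in the sign computation of Lemma \ref{llaa}).

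First, I would compute directly:
\begin{equation*}
\frac{d}{dt}\bigl(L^{2}(t)-4\pi A(t)\bigr)=2L\frac{dL}{dt}
=2L\int_{0}^{2\pi}\kappa^{-n}\,d\theta-\frac{L^{2}}{A(0)}\int_{0}^{2\pi}\kappa^{-n-1}\,d\theta,
\end{equation*}
using \eqref{lt} and the definition of $\lambda(t)$ together with the fact that $A=A(0)$. Next, the H\"older (Lin--Tsai) inequality already invoked in the proof of Lemma \ref{llaa},
\begin{equation*}
\int_{0}^{2\pi}\kappa^{-n}\,d\theta\le\frac{2\pi}{L}\int_{0}^{2\pi}\kappa^{-n-1}\,d\theta,
\end{equation*}
allows me to replace the first term and obtain
\begin{equation*}
\frac{d}{dt}\bigl(L^{2}-4\pi A\bigr)\le\frac{1}{A(0)}\bigl(4\pi A(0)-L^{2}\bigr)\int_{0}^{2\pi}\kappa^{-n-1}\,d\theta
=-\frac{L^{2}-4\pi A}{A(0)}\int_{0}^{2\pi}\kappa^{-n-1}\,d\theta.
\end{equation*}

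At this stage I need a time-independent positive lower bound for $\int_{0}^{2\pi}\kappa^{-n-1}\,d\theta$. This is exactly where the uniform curvature bound $\kappa\le J$ from Lemma \ref{kupj} enters: it yields $\kappa^{-n-1}\ge J^{-n-1}$ pointwise and hence
\begin{equation*}
\int_{0}^{2\pi}\kappa^{-n-1}\,d\theta\ge 2\pi J^{-n-1}.
\end{equation*}
Setting $c:=2\pi J^{-n-1}/A(0)>0$, the differential inequality becomes $\frac{d}{dt}(L^{2}-4\pi A)\le -c\,(L^{2}-4\pi A)$, so Gronwall gives
\begin{equation*}
0\le L^{2}(t)-4\pi A(t)\le\bigl(L^{2}(0)-4\pi A(0)\bigr)e^{-c t},\qquad t\in[0,\infty),
\end{equation*}
which proves both monotonic decrease and exponential decay to zero.

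The main obstacle is really already resolved upstream: the argument hinges on the uniform upper bound on the curvature furnished by Lemma \ref{kupj}; without it, $\int\kappa^{-n-1}\,d\theta$ could in principle degenerate and the differential inequality would not close. Everything else is an application of \eqref{lt} and the Lin--Tsai inequality that has been used before, so no new technical machinery is required.
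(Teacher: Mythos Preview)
Your proof is correct, and the overall architecture --- differentiate $L^{2}-4\pi A$, apply the Lin--Tsai/H\"older inequality to produce $\dfrac{d}{dt}(L^{2}-4\pi A)\le-\dfrac{1}{A(0)}\bigl(\int_{0}^{2\pi}\kappa^{-n-1}d\theta\bigr)(L^{2}-4\pi A)$, then bound the integral from below --- is exactly what the paper does. The only substantive difference is in how the lower bound for $\int_{0}^{2\pi}\kappa^{-n-1}d\theta$ is obtained. You invoke the pointwise curvature bound $\kappa\le J$ from Lemma~\ref{kupj} to get $\int_{0}^{2\pi}\kappa^{-n-1}d\theta\ge 2\pi J^{-n-1}$. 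The paper instead uses H\"older's inequality again, in the form $L\le\bigl(\int_{0}^{2\pi}\kappa^{-n-1}d\theta\bigr)^{1/(n+1)}(2\pi)^{n/(n+1)}$, together with $L\ge\sqrt{4\pi A(0)}$, to obtain $\int_{0}^{2\pi}\kappa^{-n-1}d\theta\ge (4\pi A(0))^{(n+1)/2}/(2\pi)^{n}$ and hence the explicit decay rate $2(A(0)/\pi)^{(n-1)/2}$. The paper's route is more self-contained (it does not rely on the rather involved Lemma~\ref{kupj}) and yields a clean constant depending only on $A(0)$ and $n$; your route is perfectly valid but makes the exponential rate depend on the less explicit constant $J$.
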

\begin{proof}
It follows easily from Lemma \eqref{llaa} that $d(L^{2}(t)-4\pi A(t))/dt\le0,$ therefore, $L^{2}(t)-4\pi A(t)$ decreases over time. By \eqref{hoi1} and \eqref{hold}, we have
\begin{align*}
\frac{d}{dt}(L^{2}(t)-4\pi A(t))&=2L(t)\int^{2\pi}_{0}\frac{1}{\kappa^{n}}d\theta-\frac{L^{2}(t)}{A(t)}
\int^{2\pi}_{0}\frac{1}{\kappa^{n+1}}d\theta\\
&\le-\frac{L^{2}(t)-4\pi A(t)}{A(t)}\int^{2\pi}_{0}\frac{1}{\kappa^{n+1}}d\theta\\
&\le-\frac{L^{n+1}(t)}{(2\pi)^{n}A(t)}\big(L^{2}(t)-4\pi A(t)\big)\\
&\le-\frac{(4\pi A(0))^{\frac{n+1}{2}}}{(2\pi)^{n}A(0)}\big(L^{2}(t)-4\pi A(t)\big)\\
&=-2\bigg(\frac{A(0)}{\pi}\bigg)^{\frac{n-1}{2}}\big(L^{2}(t)-4\pi A(t)\big).
\end{align*}
Integrating this yields
\begin{equation}
L^{2}(t)-4\pi A(t)\le \big(L^{2}(0)-4\pi A(0)\big)e^{-2\big(\frac{A(0)}{\pi}\big)^{\frac{n-1}{2}}t}.
\end{equation}
This completes the proof.
\end{proof}
Applying Lemma \ref{isod} and the Bonnesen inequality establishes the following result easily, we omit the details of the proof.
\begin{lemma}
Under the flow \eqref{fl2}, the evolving curve converges to a finite circle in the Hausdorff metric.
\end{lemma}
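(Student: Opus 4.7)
The plan is to combine Lemma \ref{isod} with the classical Bonnesen inequality, which for the convex region enclosed by $X(\cdot,t)$ with perimeter $L(t)$ and area $A(t)$ states
\begin{equation*}
L^{2}(t) - 4\pi A(t) \geq \pi^{2}\bigl(R(t) - r(t)\bigr)^{2},
\end{equation*}
where $R(t)$ and $r(t)$ denote, respectively, the circumradius and the inradius of the enclosed region.

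First I would feed Lemma \ref{isod} into this inequality: since $L^{2}(t)-4\pi A(t) \to 0$ exponentially, we immediately obtain $R(t) - r(t) \to 0$ exponentially. Combining this with the conservation of area $A(t)=A(0)$ and the elementary sandwich $\pi r^{2}(t) \leq A(0) \leq \pi R^{2}(t)$, both radii converge exponentially to the common limit $r_{0} := \sqrt{A(0)/\pi}$.

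Next I would introduce the centers $c_{r}(t)$ and $c_{R}(t)$ of the inscribed and circumscribed disks. Because the inscribed disk is contained in the circumscribed disk, $|c_{r}(t)-c_{R}(t)| \leq R(t)-r(t)$, and hence the enclosed convex region lies in the annulus $B(c_{r}(t),2R(t)-r(t)) \setminus B(c_{r}(t),r(t))$. A short radial-projection computation then yields
\begin{equation*}
d_{H}\bigl(X(\cdot,t),\,\partial B(c_{r}(t),r_{0})\bigr) \leq 2\bigl(R(t)-r(t)\bigr),
\end{equation*}
so the evolving curve lies within exponentially small Hausdorff distance of a circle of radius $r_{0}$.

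The main (and essentially only) obstacle is that the reference circle $\partial B(c_{r}(t),r_{0})$ has a time-dependent center, so the previous bound by itself does not single out one limit circle. To promote this to Hausdorff convergence to a genuine finite circle, I would use the uniform diameter bound $\operatorname{diam}(X(\cdot,t)) \leq L(t)/2 \leq L(0)/2$ from Lemma \ref{llaa} and apply the Blaschke selection theorem: along any sequence $t_{k}\to\infty$ one extracts a Hausdorff-convergent subsequence of enclosed convex bodies, and by the preceding step the limit is forced to be a circle of radius $r_{0}$. This compactness step is the only place where one has to be mildly careful; every other ingredient is an immediate consequence of Bonnesen together with the exponential decay from Lemma \ref{isod}.
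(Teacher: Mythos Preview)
Your overall strategy---feed the exponential decay of $L^{2}-4\pi A$ from Lemma \ref{isod} into the Bonnesen inequality to force $R(t)-r(t)\to 0$, then sandwich the curve in a thin annulus about $\partial B(c_{r}(t),r_{0})$---is precisely what the paper has in mind (the paper omits the proof, citing exactly these two ingredients). The estimates down to
\[
d_{H}\bigl(X(\cdot,t),\,\partial B(c_{r}(t),r_{0})\bigr)\le 2\bigl(R(t)-r(t)\bigr)
\]
are fine.

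The gap is in your final paragraph. First, the Blaschke selection theorem requires the convex bodies to lie in a common bounded set; a uniform diameter bound does not supply this, and nothing you have written rules out the centers $c_{r}(t)$ drifting to infinity. Second, and more seriously, even if Blaschke applies it yields only that every sequence $t_{k}\to\infty$ has a subsequence converging to \emph{some} circle of radius $r_{0}$; different subsequences could a priori produce circles with different centers. So you have shown that the evolving curve becomes asymptotically circular, but not that it converges in Hausdorff distance to one fixed circle. Compactness alone cannot resolve this: you need a separate argument that $t\mapsto c_{r}(t)$ (equivalently, the first Fourier coefficients of the support function $p(\cdot,t)$) is Cauchy as $t\to\infty$. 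A standard route is to show the normal speed $p\lambda-\kappa^{-n}$ is integrable in $t$, so that $X(\theta,\cdot)$ converges for each $\theta$; this typically requires a decay rate for $\rho-r_{0}$ (or for $\varphi-r_{0}^{n}$) beyond what Bonnesen alone gives. The paper, having omitted the proof, does not address this point either, so the lacuna is shared.
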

\begin{lemma}\label{ntjj}
(Uniform boundedness of spatial derivatives)
Under the flow \eqref{fl2}, there exist positive constants $D_{i}$
(i=1,2,3\dots), independent of time, such that
\begin{equation}\label{ntj}
\bigg\lvert\frac{\partial^{i}\varphi}{\partial\theta^{i}}(\theta,t)\bigg\lvert\le D_{i},\quad\forall (\theta,t)\in S^{1}\times[0,\infty).
\end{equation}
\end{lemma}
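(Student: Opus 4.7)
The plan is to establish (\ref{ntj}) by induction on $i$. The case $i=0$ follows from Lemmas \ref{lok} and \ref{kupj}, which give $M_3\le\kappa\le J$ and hence two-sided positive bounds on $\varphi=\rho^n$; the case $i=1$ is supplied by Lemma \ref{est}. For the inductive step, the structural point is that the two-sided bound on $\varphi$ makes the evolution equation (\ref{nt}) \emph{uniformly} parabolic, its leading coefficient $n\varphi^{1-1/n}$ being bounded away from $0$ and $\infty$ independently of $t$. Differentiating (\ref{nt}) $i$ times in $\theta$ via the Leibniz rule yields a linear parabolic equation for $\partial_\theta^i\varphi$ whose non-principal coefficients are polynomials in $\varphi,\partial_\theta\varphi,\dots,\partial_\theta^{i-1}\varphi$ and $\lambda(t)$; by the inductive hypothesis and Lemma \ref{lamj}, these are uniformly bounded on $S^1\times[0,\infty)$.

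From here I would run a standard parabolic-regularity bootstrap. The Krylov--Safonov estimate, which requires only $L^\infty$ coefficient bounds and uniform parabolicity, gives $\varphi\in C^{\alpha,\alpha/2}$ on every parabolic cylinder of unit size with constants independent of $t$. Since $\lambda(t)$ inherits Hölder regularity in $t$ from $\varphi$ through its integral definition, the quasilinear coefficients of (\ref{nt}) acquire $C^{\alpha,\alpha/2}$ regularity, and interior parabolic Schauder estimates promote $\varphi$ to $C^{2+\alpha,1+\alpha/2}$. Iterating Schauder on the equations satisfied by $\partial_\theta^i\varphi$ produces uniform $C^{k+\alpha,k/2+\alpha/2}$ bounds for every $k$, with constants depending only on the ellipticity data and $L^\infty$ bounds of $\varphi$; short-time existence theory handles the initial slab $[0,T_0]$, so the estimates extend to all of $S^1\times[0,\infty)$, establishing (\ref{ntj}).

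The main obstacle I foresee is that a direct maximum-principle argument on $\Phi_i=(\partial_\theta^i\varphi)^2$ in the style of Lemma \ref{est} does not yield the uniform-in-time bound. Already for $i=2$, differentiating (\ref{nt}) twice produces, via $[\varphi^{1-1/n}]_{\theta\theta}\cdot\varphi_{\theta\theta}$, a term $n(1-1/n)\varphi^{-1/n}\varphi_{\theta\theta}^2$ in $(\varphi_{\theta\theta})_t$; when tested against $2\varphi_{\theta\theta}$ this contributes a super-linear $C\Phi_2^{3/2}$ term to the evolution of $\Phi_2$ at its maximum, while the good dissipative contribution $-2n\varphi^{1-1/n}(\partial_\theta^3\varphi)^2$ vanishes precisely at the maximum point, so Young's inequality alone cannot close the estimate. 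Routing the argument through Krylov--Safonov--Schauder sidesteps this difficulty, because those estimates depend only on the ellipticity constants and $L^\infty$ data, exactly the quantities we have controlled uniformly in time.
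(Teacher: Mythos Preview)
Your proposal is correct, and it takes a genuinely different route from the paper's own proof.

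For $i=1$, the paper does \emph{not} invoke Lemma \ref{est}; instead it runs a direct maximum-principle argument on the auxiliary quantity $\Psi=\varphi_\theta+\mu\varphi$ with $\mu=\pm1$ (the sign chosen according to whether $n>1$ or $0<n<1$). After substituting $\varphi_\theta=\Psi-\mu\varphi$, $\varphi_{\theta\theta}=\Psi_\theta-\mu(\Psi-\mu\varphi)$, etc., into the evolution equation one finds a term $-(n-1)\mu\,\varphi^{-1/n}\Psi^{2}$ in $\Psi_t$; with the right choice of $\mu$ this quadratic term has a favourable sign and dominates all the (uniformly bounded) linear and zeroth-order terms when $|\Psi|$ is large, forcing $\partial_t\Psi_{\max}<0$ (resp.\ $\partial_t\Psi_{\min}>0$) and giving the bound. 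The paper then simply asserts that ``higher derivatives follow similarly by induction'' without further details. Your observation that Lemma \ref{est}, combined with the already-established $C^0$ bound on $\varphi$, immediately yields the $i=1$ case is a cleaner shortcut the paper does not take.

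For $i\ge 2$ the approaches diverge more substantially. The paper's implicit claim is that the Lin--Tsai style auxiliary function $\Psi_i=\partial_\theta^i\varphi+\mu\,\partial_\theta^{i-1}\varphi$ continues to produce a dominant quadratic term of good sign. You instead go through Krylov--Safonov and interior Schauder, using that the uniform two-sided bound on $\varphi$ makes (\ref{nt}) uniformly parabolic with $L^\infty$ data independent of $t$; this is heavier machinery but entirely standard, and it cleanly sidesteps the cubic obstruction you correctly identify in a naive $\Phi_i=(\partial_\theta^i\varphi)^2$ argument. The trade-off: the paper's method, when it works, is more elementary and self-contained; yours is more robust, handles the induction transparently, and does not rely on the reader reconstructing the higher-order auxiliary-function computation.
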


\begin{proof}
The base case $k=1$ is treated in detail, higher derivatives follow similarly by induction. By Lemma \ref{lok} and Lemma \ref{kupj}, there exist two positive constants $m$ and $M$ such that
\begin{equation}\label{nuji}
0<m\le\varphi(\theta,t)\le M.
\end{equation}
Define the auxiliary function
\[
\Psi(\theta,t) \triangleq \varphi_\theta + \mu \varphi
\]
where $\mu$ is a constant to be determined. By the evolution of $\varphi$, we have
\begin{align}\label{eq:psi_t}
\frac{\partial \Psi}{\partial t} =(\varphi_\theta)_{t} + \mu\varphi_t=&n(1-\frac{1}{n}) \varphi^{-\frac{1}{n}} \varphi_\theta (\varphi_{\theta\theta} + \varphi - \lambda \varphi^{\frac{1}{n}}) + n \varphi^{1 - \frac{1}{n}} (\varphi_{\theta\theta\theta} + \varphi_\theta - \frac{1}{n}\lambda\varphi^{\frac{1}{n}-1} \varphi_\theta)\notag\\
&+n\mu\varphi^{1 - \frac{1}{n}} (\varphi_{\theta\theta} + \varphi - \lambda \varphi^{\frac{1}{n}})\notag\\
=&n \varphi^{1 - \frac{1}{n}}\varphi_{\theta\theta\theta}+\big[(n-1)\varphi^{-\frac{1}{n}}\varphi_\theta
+n\mu\varphi^{1 - \frac{1}{n}}\big]\varphi_{\theta\theta}+\big[(2n-1)\varphi^{1 - \frac{1}{n}}-n\lambda\big]\varphi_\theta\notag\\
&+n\mu\varphi^{2 - \frac{1}{n}}-n\mu\lambda\varphi .
\end{align}
By the identities
\begin{align*}
\varphi_\theta = \Psi -\mu\varphi,\quad
\varphi_{\theta\theta} = \Psi_\theta - \mu(\Psi -\mu\varphi), \quad
\varphi_{\theta\theta\theta} = \Psi_{\theta\theta} - \mu\Psi_\theta +\mu^2(\Psi -\mu\varphi),
\end{align*}
we can rewrite \eqref{eq:psi_t} as
\begin{align*}
\frac{\partial \Psi}{\partial t} =&  n \varphi^{1 - \frac{1}{n}}\big[\Psi_{\theta\theta} - \mu\Psi_\theta +\mu^2(\Psi -\mu\varphi)\big]+\big[(n-1)(\Psi -\mu\varphi)\varphi^{-\frac{1}{n}}
+n\mu\varphi^{1 - \frac{1}{n}}\big]\big[\Psi_\theta - \mu(\Psi -\mu\varphi)\big]\notag\\
&+\big[(2n-1)\varphi^{1 - \frac{1}{n}}-n\lambda\big]( \Psi -\mu\varphi)+n\mu\varphi^{2 - \frac{1}{n}}-n\mu\lambda\varphi\notag\\
=& n \varphi^{1-\frac{1}{n}}\Psi_{\theta\theta}+(n-1)\varphi^{-\frac{1}{n}}( \Psi -\mu\varphi)\Psi_\theta
-(n-1)\mu\varphi^{-\frac{1}{n}}\Psi^2\notag\\
&+\big(\big[2(n-1)\mu^{2}+(2n-1)\big]\varphi^{1 - \frac{1}{n}}-n\lambda\big)\Psi
+\mu(\mu^2+1)(1-n)\varphi^{2 - \frac{1}{n}}
\end{align*}
Notice that $\varphi$ and $\lambda$ are uniformly bounded from above and below. When $n>1$, take $\mu=1$, then $\Psi=\varphi_\theta+\varphi$ and $-(n-1)\mu\varphi^{-\frac{1}{n}}=-(n-1)\varphi^{-\frac{1}{n}}<0$. If $\Psi(\theta_{\ast},t)=\Psi_{\max}(t)$ is large enough (positively) at the maximum point $\theta=\theta_{\ast}$, then we can obtain
\begin{equation*}
\frac{\partial \Psi}{\partial t}\le-(n-1)\varphi^{-\frac{1}{n}}\Psi^2+\big[(4n-3)\varphi^{1 - \frac{1}{n}}-n\lambda\big]\Psi
+2(1-n)\varphi^{2 - \frac{1}{n}}<0\quad \text{at}\quad (\theta_{\ast},t).
\end{equation*}
By the maximum principle, the function $\Psi=\varphi_\theta+\varphi$ is bounded above by a positive constant
that does not depend on time, and this is also true for the function $\varphi_\theta$.
 Similarly, if we take $\mu=-1$, then $\Psi=\varphi_\theta-\varphi$ and $-(n-1)\mu\varphi^{-\frac{1}{n}}=(n-1)\varphi^{-\frac{1}{n}}>0$. If $\Psi(\theta_{\ast},t)=\Psi_{\min}(t)$ is large enough (negatively) at the minimum point $\theta=\theta_{\ast}$, then we have
\begin{equation*}
\frac{\partial \Psi}{\partial t}\ge(n-1)\varphi^{-\frac{1}{n}}\Psi^2+\big[(4n-3)\varphi^{1 - \frac{1}{n}}-n\lambda\big]\Psi
-2(1-n)\varphi^{2 - \frac{1}{n}}>0\quad \text{at}\quad (\theta_{\ast},t).
\end{equation*}
By the minimum principle, the function $\Psi=\varphi_\theta-\varphi$  is bounded below by a negative time-independent constant and so is the function $\varphi_\theta$. When $0<n<1$,  the proof is similar.
\end{proof}

\begin{lemma}\label{lto0}
Under the flow \eqref{fl2}, we have
\begin{equation}
\frac{dL}{dt}\to0\quad\text{as}\quad t\to\infty.
\end{equation}
\end{lemma}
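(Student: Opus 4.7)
The plan is to combine the Lyapunov-type decay of $L(t)$ with a uniform bound on $L''(t)$ and then conclude via Barbalat's lemma. By Lemma \ref{llaa} the length $L(t)$ is non-increasing, and by Lemma \ref{lajie} it is bounded below by $\sqrt{4\pi A(0)}$. Hence $L_\infty := \lim_{t\to\infty} L(t)$ exists and is finite, and
\[
\int_0^{\infty}\bigl(-L'(t)\bigr)\,dt \;=\; L(0) - L_\infty \;<\; \infty,
\]
so the nonnegative function $-L'(t)$ is integrable on $[0,\infty)$.

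The central step is to verify that $L''(t)$ is uniformly bounded. Rewriting \eqref{lt} in terms of $\varphi=\rho^{n}$ gives $L'(t)=\int_0^{2\pi}\varphi\,d\theta - \lambda(t)L(t)$, and one more differentiation yields
\[
L''(t) \;=\; \int_0^{2\pi}\varphi_t\,d\theta \;-\; \lambda'(t)\,L(t) \;-\; \lambda(t)\,L'(t).
\]
Substituting \eqref{nt} for $\varphi_t$ and integrating by parts once on $S^{1}$,
\[
\int_0^{2\pi}\varphi_t\,d\theta \;=\; -(n-1)\int_0^{2\pi}\varphi^{-\frac{1}{n}}(\varphi_\theta)^2\,d\theta \;+\; n\int_0^{2\pi}\varphi^{\,2-\frac{1}{n}}\,d\theta \;-\; n\lambda(t)\int_0^{2\pi}\varphi\,d\theta.
\]
Every ingredient on the right is controlled uniformly in $t$: by Lemmas \ref{lok} and \ref{kupj} the function $\varphi$ is bounded above and away from zero; by Lemma \ref{ntjj} we have $|\varphi_\theta|\le D_1$; by Lemma \ref{lamj} the multiplier $\lambda(t)$ is bounded; and via the identity \eqref{la1} the derivative $\lambda'(t)$ is controlled by the same estimates. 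Therefore there exists a constant $K>0$, independent of time, with $|L''(t)|\le K$ on $[0,\infty)$.

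Consequently $L'(t)$ is $K$-Lipschitz and in particular uniformly continuous on $[0,\infty)$. Together with the integrability of $-L'(t)$ established above, Barbalat's lemma forces $L'(t)\to 0$ as $t\to\infty$. The main obstacle is the uniform $L^{\infty}$-bound on $L''(t)$, which depends essentially on the $C^{1}$-estimate of $\varphi$ from Lemma \ref{ntjj} together with the control of $\lambda'(t)$ coming from \eqref{la1}; all remaining manipulations are routine bookkeeping.
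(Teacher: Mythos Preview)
Your proof is correct and follows essentially the same route as the paper: both arguments show that $-L'(t)\ge 0$ is integrable on $[0,\infty)$, compute $L''(t)$ by differentiating \eqref{lt} and substituting \eqref{nt} and \eqref{la1}, bound every term via the uniform estimates on $\varphi$, $\varphi_\theta$, $\lambda$, and $L$, and then invoke Barbalat's lemma (which the paper states as a ``useful result from calculus'').
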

\begin{proof}
We first recall a useful result from calculus. Let $f(t)\ge0$ be a differentiable function on $[0,\infty)$ satisfying $\int^{\infty}_{0}f(t)dt<\infty$. If there exists a positive constant $C$ such that $ f^{'}(t)\le C$ on $[0,\infty)$ or there exists a negative constant $C$ such that $ f^{'}(t)\ge C$ on $[0,\infty)$, then it must be that $f(t)\to0$ as $t\to\infty$.

Let $f(t)=-L^{'}(t).$ By Lemma \ref{llaa}, we know that $f(t)\ge0$ on $[0,\infty)$ with
$$\int^{\infty}_{0}f(t)dt=L(0)-L(\infty)<\infty.$$
Compute
\begin{align*}
\big\lvert L^{''}(t)\big\lvert=&\lvert\frac{d}{dt}\int_{0}^{2\pi}\varphi d\theta-L(t)\lambda(t)\lvert=
\bigg\lvert\int_{0}^{2\pi}\varphi_td\theta-L^{'}(t)\lambda(t)-L(t)\lambda^{'}(t)\bigg\lvert\\
=&\bigg\lvert\int_{0}^{2\pi}n\varphi^{1-\frac{1}{n}}[\varphi_{\theta\theta}+\varphi-\lambda(t)\varphi^{\frac{1}{n}}]d\theta
-\bigg[\int_{0}^{2\pi}\varphi d\theta-L(t)\lambda(t)\bigg]\lambda(t)\\
&-\frac{(n+1)L(t)}{2nA(0)}\int^{2\pi}_{0}\varphi^{\frac{1}{n}}
\bigg(n\varphi^{1-\frac{1}{n}}[\varphi_{\theta\theta}+\varphi-\lambda(t)\varphi^{\frac{1}{n}}]\bigg)d\theta\bigg\lvert\\
=&\bigg\lvert (1-n)\int^{2\pi}_{0}\varphi^{-\frac{1}{n}}(\varphi_{\theta})^{2}d\theta+n\int^{2\pi}_{0}\varphi^{2-\frac{1}{n}}d\theta
-(n+1)\lambda(t)\int^{2\pi}_{0}\varphi d\theta+L(t)\lambda^2(t)\\
&-\frac{(n+1)L(t)}{2A(0)}\bigg[\int^{2\pi}_{0}-(\varphi_{\theta})^{2}d\theta+\int^{2\pi}_{0}\varphi^{2}d\theta
-\frac{1}{2A(0)}\bigg(\int^{2\pi}_{0}\varphi^{1+\frac{1}{n}}d\theta\bigg)^{2}\bigg]\bigg\lvert
\end{align*}
Since $\varphi, \varphi_{\theta}, L(t)$ and $\lambda(t)$ are all bounded quantities, there exists a constant $C>0$ such that $\big\lvert L^{''}(t)\big\lvert\le C$. Thus we have $f(t)=-L^{'}(t)\to0$ as $t\to\infty$.
\end{proof}
Combining the Arzela-Ascoli theorem with Lemmas \ref{ntjj} and \ref{lto0} yields the following smooth convergence result.
\begin{lemma}\label{nuin} ($C^{\infty}$ convergence of $\omega$)
Under the flow \eqref{fl2}, we have
\begin{equation}
\lim\limits_{t\to\infty}\bigg\Vert\varphi(\theta,t)-\bigg(\sqrt{\frac{A(0)}{\pi}}\bigg)^{n}\bigg\Vert_{C^{i}(S^{1})}=0,
\quad \forall i=0,1,2,3\cdots.
\end{equation}
\end{lemma}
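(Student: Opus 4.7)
The plan is to combine the time-independent bounds on all spatial derivatives of $\varphi$ from Lemma \ref{ntjj} with the Arzela--Ascoli theorem to extract strong subsequential limits along any sequence $t_{k}\to\infty$, then to use the exponential decay of the isoperimetric deficit from Lemma \ref{isod} together with area preservation to identify every such limit uniquely as the constant $r_{0}^{n}$, where $r_{0}=\sqrt{A(0)/\pi}$. Uniqueness of the subsequential limit then upgrades subsequential convergence to genuine convergence as $t\to\infty$.

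For precompactness, fix $i\ge 0$. By Lemma \ref{ntjj} together with the uniform two-sided bound $0<m\le\varphi\le M$ coming from Lemmas \ref{lok} and \ref{kupj}, the family $\{\varphi(\cdot,t)\}_{t\ge 0}$ is bounded in $C^{i+1}(S^{1})$ independently of $t$. Given any $t_{k}\to\infty$, Arzela--Ascoli extracts a subsequence converging in $C^{i}(S^{1})$, and a diagonal extraction in $i$ produces a further subsequence $t_{k_{j}}$ along which $\varphi(\cdot,t_{k_{j}})\to\varphi_{\infty}$ in $C^{i}(S^{1})$ for every $i\ge 0$, with $\varphi_{\infty}\in C^{\infty}(S^{1})$ and $\varphi_{\infty}\ge m>0$.

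Next, I would identify $\varphi_{\infty}$. Set $\rho_{\infty}=\varphi_{\infty}^{1/n}$. Because the convergence is in $C^{\infty}$, the closing conditions $\int_{0}^{2\pi}\rho\cos\theta\,d\theta=\int_{0}^{2\pi}\rho\sin\theta\,d\theta=0$ satisfied by each $\rho(\cdot,t_{k_{j}})$, along with the length and area integral formulas, pass to the limit. Hence $\rho_{\infty}$ is the radius-of-curvature function of a smooth closed strictly convex curve $\gamma_{\infty}$ with $L_{\infty}=\int_{0}^{2\pi}\rho_{\infty}\,d\theta=\lim_{j\to\infty}L(t_{k_{j}})$ and $A_{\infty}=\lim_{j\to\infty}A(t_{k_{j}})=A(0)$. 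By Lemma \ref{isod} and the identity $A(t)\equiv A(0)$ from Lemma \ref{llaa}, $L(t)\to 2\pi r_{0}$, so $L_{\infty}=2\pi r_{0}$ and $A_{\infty}=\pi r_{0}^{2}$, giving $L_{\infty}^{2}=4\pi A_{\infty}$. The equality case of the classical isoperimetric inequality then forces $\gamma_{\infty}$ to be a circle of radius $r_{0}$, so $\rho_{\infty}\equiv r_{0}$ and $\varphi_{\infty}\equiv r_{0}^{n}$.

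Finally, since every sequence $t_{k}\to\infty$ admits a subsequence along which $\varphi(\cdot,t_{k_{j}})\to r_{0}^{n}$ in $C^{i}(S^{1})$ for every $i$, a standard contradiction argument yields $\varphi(\cdot,t)\to r_{0}^{n}$ in $C^{i}(S^{1})$ as $t\to\infty$ for each $i$. The principal subtlety, and really the only nontrivial step, is the identification above: one must check that the $C^{\infty}$ subsequential limit $\rho_{\infty}$ truly describes a smooth closed convex curve whose length and area are the expected limits of $L(t_{k_{j}})$ and $A(t_{k_{j}})$. Once this is in place, isoperimetric rigidity finishes the argument, and the uniform higher-order smoothness estimates of Lemma \ref{ntjj} make the convergence automatically $C^{\infty}$.
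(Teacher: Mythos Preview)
Your argument is correct, but the identification of the subsequential limit proceeds along a different route than the paper's. The paper first proves a separate lemma (Lemma \ref{lto0}) that $dL/dt\to 0$, and from this extracts the integral relation
\[
\int_{0}^{2\pi}\rho_{\infty}^{\,n}\,d\theta=\frac{1}{2A}\int_{0}^{2\pi}\rho_{\infty}\,d\theta\cdot\int_{0}^{2\pi}\rho_{\infty}^{\,n+1}\,d\theta,
\]
which, combined with a Chebyshev--type double integral
\[
J=\frac{1}{2}\int_{0}^{2\pi}\!\!\int_{0}^{2\pi}\bigl(\rho_{\infty}(x)-\rho_{\infty}(y)\bigr)\bigl(\rho_{\infty}^{\,n}(x)-\rho_{\infty}^{\,n}(y)\bigr)\,dx\,dy\ge 0
\]
and the isoperimetric inequality, forces $J=0$ and hence $\rho_{\infty}\equiv\text{const}$. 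You instead read off $L(t)\to 2\pi r_{0}$ and $A(t)\equiv\pi r_{0}^{2}$ directly from Lemma \ref{isod} and area preservation, pass these to the limit curve via the $C^{\infty}$ convergence of $\rho$, and invoke the rigidity (equality) case of the isoperimetric inequality. Your path is shorter and avoids Lemma \ref{lto0} altogether; the paper's path stays more elementary in that it never appeals to isoperimetric rigidity for the limiting curve, only to the inequality $L^{2}\ge 4\pi A$ along the flow. Either way the higher $C^{i}$ convergence then comes for free from Lemma \ref{ntjj} and Arzel\`a--Ascoli, exactly as you outline.
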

\begin{proof}
Since $\varphi$ and $\varphi_{\theta}$ are both uniformly bounded, for any sequence $t_i\to\infty$, there exists a subsequence (still denoted as $t_i$) such that $\lim\limits_{i\to\infty}\varphi(\theta,t_i)=\varphi_{\infty}(\theta)$ uniformly on $S^1$, where $\varphi_{\infty}(\theta)\ge0$ is some continuous bounded function on $S^1$. Set $\varphi_{\infty}(\theta)=\rho^n_{\infty}(\theta)$. By lemma \ref{lto0}, we have
\begin{equation*}
\frac{dL}{dt}=\int_{0}^{2\pi}\rho^n(\theta)d\theta-\frac{L}{2A}\int_{0}^{2\pi}\rho^{n+1}(\theta)d\theta\to0\quad\text{as}\quad t\to\infty.
\end{equation*}
The above implies
\begin{equation}\label{rinl}
\int_{0}^{2\pi}\rho^n_{\infty}(\theta)d\theta=\frac{1}{2A}
\int_{0}^{2\pi}\rho_{\infty}(\theta)d\theta\int_{0}^{2\pi}\rho^{n+1}_{\infty}(\theta)d\theta,
\end{equation}
where we have used the identity
\begin{equation*}
\lim\limits_{i\to\infty}L(t_{i})=\lim\limits_{i\to\infty}\int^{2\pi}_{0}
\rho(\theta,t_{i}) d\theta=\int^{2\pi}_{0}\rho_{\infty}(\theta)d\theta.
\end{equation*}
We now define the double integral
\begin{equation}\label{jin}
J\triangleq\frac{1}{2} \int_0^{2\pi} \int_0^{2\pi} \left( \rho_\infty(x) - \rho_\infty(y) \right) \left( \rho_\infty^n(x) - \rho_\infty^n(y) \right) dx dy.
\end{equation}
Note that the integrand in \eqref{jin} is a nonnegative continuous function on $S^1\times S^1$, hence it necessarily follows that $J\ge0$. On the other hand, expanding the integrand and integrating gives
\begin{equation*}
J = 2\pi \int_0^{2\pi} \rho_\infty^{n+1}  d\theta - \bigg( \int_0^{2\pi} \rho_\infty  d\theta \bigg) \bigg( \int_0^{2\pi} \rho_\infty^n  d\theta \bigg).
\end{equation*}
Combining this with \eqref{rinl} we obtain
\begin{align*}
J &= 2\pi \int_0^{2\pi} \rho_\infty^{n+1}  d\theta - \frac{1}{2A}\bigg( \int_0^{2\pi} \rho_\infty  d\theta \bigg)^2 \bigg( \int_0^{2\pi} \rho_\infty^{n+1}  d\theta \bigg)\notag\\
&\le2\pi \int_0^{2\pi} \rho_\infty^{n+1}  d\theta-\frac{4\pi A}{2A}\int_0^{2\pi} \rho_\infty^{n+1}  d\theta=0,
\end{align*}
where we have used the isoperimetric inequality $\big(\int_0^{2\pi}\rho_\infty  d\theta \big)^2\geq 4\pi A$. From this, we deduce that $J=0$. The nonnegative integrand vanishes almost everywhere, implying $\rho_\infty(x) =\rho_\infty(y)$ for all $x, y\in S^1$. Thus, $\rho_\infty(\theta)$ must be a constant function with $\rho_\infty(\theta)=\sqrt{A/\pi}$ (since the flow is area-preserving). Consequently $\varphi_\infty= \left( \sqrt{A/\pi} \right)^n$. As every sequence has a subsequence converging to this constant, we must have
$\varphi(\theta,t)\to\big(\sqrt{A/\pi}\big)^n$ uniformly, proving $C^0$ convergence.

For $i\ge1$, the uniform convergence of $\varphi$ to a constant and the uniform bounds on $\partial^{i}\varphi/\partial\theta^{i}$ ensure that $\partial^{i}\varphi/\partial\theta^{i}\to0$ uniformly on $S^1$ as $t\to\infty$, by repeated application of Arzela-Ascoli theorem. This completes the proof.
\end{proof}
As an application of flow \eqref{fl2}, we can get the following inequality.
\begin{theorem}
If $\gamma$ is a convex closed curve with length $L$, area $A$ and curvature $\kappa$ , then for $n>0$,
\begin{equation}
\label{ine1}\int_{0}^{2\pi} \frac{1}{\kappa^{n+1} } d\theta \geq 2\pi \left( \sqrt{\frac{A}{\pi}} \right)^{n+1},
\end{equation}
with equality if and only if the curve is a circle.
\end{theorem}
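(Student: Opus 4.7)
The plan is to use the flow \eqref{fl2} itself as a proof mechanism: we evolve the given curve $\gamma$ as the initial datum of \eqref{fl2} and extract \eqref{ine1} from the monotonicity of the nonlocal coefficient $\lambda(t)$ together with the convergence of the flow to a circle of radius $r_0 := \sqrt{A/\pi}$. As a first step I would rewrite \eqref{ine1} in the equivalent scalar form $\lambda(0) \ge r_0^{n-1}$, using $2A = 2\pi r_0^{2}$ and $\lambda(t) = \frac{1}{2A}\int_0^{2\pi}\kappa^{-(n+1)}(\theta,t)\,d\theta$. The case where $\gamma$ is merely convex but not strictly convex is trivial, since then $\kappa$ vanishes somewhere and the left-hand side of \eqref{ine1} is infinite, so I may assume $\gamma$ is strictly convex and Theorem \ref{them} applies.

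Next I would invoke Lemma \ref{lamj}, whose proof actually produces the sharper information $d\lambda/dt \le 0$ along the flow. Hence $\lambda(t)$ is non-increasing starting from $X_0 = \gamma$. Combining Lemma \ref{nuin} applied to $\varphi = \rho^n$ with the area preservation from Lemma \ref{llaa}, we have $\rho(\cdot,t) \to r_0$ uniformly as $t \to \infty$, so
\[
\lim_{t\to\infty}\lambda(t) \;=\; \frac{1}{2A}\int_0^{2\pi} r_0^{n+1}\,d\theta \;=\; \frac{\pi r_0^{n+1}}{\pi r_0^{2}} \;=\; r_0^{n-1}.
\]
Monotonicity then yields $\lambda(0) \ge \lim_{t\to\infty}\lambda(t) = r_0^{n-1}$, which unwinds to precisely \eqref{ine1}.

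For the equality case: if equality holds in \eqref{ine1} for $\gamma$, then $\lambda(0) = r_0^{n-1} = \lim_{t\to\infty}\lambda(t)$, so the non-increasing function $\lambda(t)$ is constant on $[0,\infty)$ and $d\lambda/dt \equiv 0$. Tracing through the derivation \eqref{la1}--\eqref{la3}, this requires in particular equality in the isoperimetric step $L^{2}(t) \ge 4\pi A(t)$ at every $t$, and already at $t = 0$ this forces $\gamma$ itself to be a circle. The converse direction is an immediate direct substitution. The main obstacle I expect is precisely this equality analysis: one has to track which of the several inequalities chained in Lemma \ref{lamj} (Wirtinger, Lin--Tsai, and isoperimetric) are tight simultaneously, and verify that no non-circular strictly convex curve can saturate all of them. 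Fortunately, isoperimetric saturation alone already characterizes the circle, so the potentially delicate multi-case analysis collapses to this single observation.
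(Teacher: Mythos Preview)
Your proposal is correct and follows essentially the same strategy as the paper's own proof: evolve $\gamma$ under the flow, use the monotonicity $d\lambda/dt\le 0$ established in Lemma~\ref{lamj}, and compare $\lambda(0)$ with the limiting circular value $\lambda_c=(A/\pi)^{(n-1)/2}=r_0^{\,n-1}$. The only cosmetic differences are that you explicitly dispose of the non-strictly-convex case and argue the equality clause via isoperimetric rigidity at $t=0$, whereas the paper phrases it through ``$\lambda$ constant $\Rightarrow$ flow stationary $\Rightarrow$ circle''; both routes are valid and amount to the same observation.
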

\begin{proof}
As $t \to \infty$, the curve converges to a circle of radius $r = \sqrt{A/\pi}$, with constant curvature $\kappa_{\infty} = \sqrt{\pi/A}$. Thus,
\[
\lambda_c = \lim_{t\to\infty} \lambda(t) = \frac{1}{2A} \int_{0}^{2\pi} \frac{1}{\kappa_{\infty}^{n+1}}  d\theta = \pi^{\frac{1-n}{2}} A^{\frac{n-1}{2}}.
\]
Given that $\lambda(t)$ is decreasing, \eqref{ine1} can be easily derived.
The equality holds if and only if $\lambda(t) = \lambda_c$ for all $t$. Since $\lambda(t)$ is strictly decreasing unless stationary, this occurs if and only if the curve is stationary. The parabolicity of the flow ensures that circles are the unique stationary solutions. At any circle, $\kappa$ is constant, and direct computation shows equality in the inequality.
\end{proof}
\textbf{Acknowledgements}
This work is supported by Key Natural Science Project of Anhui Province (No. 2024AH050573).

\textbf{Data availability}
Data sharing  not applicable to this article as no datasets were generated or analysed during the current study.


\end{document}